\documentclass[11pt]{amsart}

\usepackage{amssymb}
\usepackage{amsmath,amsthm,amscd,amsfonts}
\usepackage{enumerate}

\usepackage[dvips, dvipsnames, usenames]{color}

%%% Definitions
\newcommand{\trid}{\triangleright}
\newcommand{\Aut}{\operatorname{Aut}}
\newcommand{\Inn}{\operatorname{Inn}}
\newcommand{\Out}{\operatorname{Out}}

\newcommand\id{\operatorname{id}}

\newcommand{\Sym}{\mathbb{S}}
\newcommand{\gap}{\textsf{GAP}}

\newcommand{\Z}{{\mathbb Z}}

\newcommand{\oc}{{\mathcal O}}
\newtheorem{step-j3}{Step}
\newtheorem{step-j4}{Step}
\newtheorem{step-ly}{Step}
\newtheorem{step-b}{Step}
\newtheorem{step-on}{Step}
\newtheorem{step-mcl}{Step}
\newtheorem{step-hn}{Step}
\newtheorem{step-fi24'}{Step}

%%% Theorems, definitions, examples
\theoremstyle{plain}
\newtheorem{thm}{Theorem}[section]
\newtheorem{lem}[thm]{Lemma}
\newtheorem{rem}[thm]{Remark}
\newtheorem{pro}[thm]{Proposition}
\newtheorem{cor}[thm]{Corollary}

\newtheorem*{thm*}{Theorem}
% NEWCOMMANDS

\newcommand\toba{{\mathfrak B }}

\begin{document}

\thispagestyle{empty}

\title[Twisted conjugacy classes of type D in sporadic simple groups]
{On twisted conjugacy classes \\ of type D in sporadic simple groups}

\author[Fantino and Vendramin]{F. Fantino and L. Vendramin}

\address{FF: Facultad de Matem\'atica, Astronom\'{\i}a y F\'{\i}sica,
Universidad Nacional de C\'ordoba. CIEM -- CONICET. %\newline \noindent
Medina Allende s/n (5000) Ciudad Universitaria, C\'ordoba,
Argentina}
\address{FF: UFR de Math\'ematiques, Universit\'e Paris Diderot - Paris 7, 175 Rue du Chevaleret, 75013, Paris, France}
\email{fantino@famaf.unc.edu.ar}
\urladdr{http://www.mate.uncor.edu/~fantino/}

\address{LV: Departamento de Matem\'atica, FCEN,
Universidad de Buenos Aires, Pab. I, Ciudad Universitaria (1428),
Buenos Aires, Argentina}
\email{lvendramin@dm.uba.ar}
\urladdr{http://mate.dm.uba.ar/~lvendram/}

\thanks{This work was partially supported by CONICET, ANPCyT-Foncyt, Secyt-UNC,
Embajada de Francia en Argentina}% and Secyt-UBA}

\subjclass[2010]{16T05; 17B37}

\begin{abstract}
We determine twisted conjugacy classes of type D associated with the sporadic
simple groups. This is an important step in the program of the classification
of finite-dimensional pointed Hopf algebras with non-abelian coradical.  As a
by-product we prove that every complex finite-dimensional pointed Hopf algebra
over the group of automorphisms of $M_{12}$, $J_2$, $Suz$, $He$, $HN$, $T$ is
the group algebra. In the appendix we improve the study of conjugacy
classes of type D of sporadic simple groups.
\end{abstract}

\maketitle

\section{Introduction}

A fundamental step in the classification of finite-dimensional complex
point\-ed Hopf algebras, in the context of the Lifting method \cite{MR1659895},
is the determination of all finite-dimensional Nichols algebras of braided
vector spaces arising from Yetter-Drinfled modules over groups. This problem
can be reformulated in other terms: to study finite-dimensional Nichols
algebras of braided vector spaces arising from pairs $(X,q)$, where $X$ is a
rack and $q$ is a 2-cocycle of $X$.

A useful strategy to deal with this problem is to discard those pairs $(X,q)$
whose associated Nichols algebra is infinite dimensional. A powerful tool to
discard such pairs is the notion of rack of type D \cite{AMPA}. This notion is
based on the theory of Weyl groupoids developed in \cite{MR2766176} and
\cite{MR2734956}.  The importance of racks of type D lies in the following
fact: if $X$ is a rack of type D then the Nichols algebra associated to $(X,q)$
is infinite-dimensional for all 2-cocycle $q$.
The property of being of type D is well-behaved with respect to monomorphisms and
epimorphisms, see Remark \ref{rem:tipoD:inj:proy}.
On the other hand, it is well-known
that a finite rack can be decomposed as an union of indecomposable subracks.
Further, every indecomposable rack $X$ admits a surjection $X\to Y$, where $Y$
is a simple rack, and the classification of finite simple racks is known, see
\cite{MR1994219} and \cite{MR682881}.
These facts and the ubiquity of racks of type D
suggests a powerful approach for the classification problem of
finite-dimensional pointed Hopf algebras over non-abelian groups: to classify
finite simple racks of type D.  This program was described in \cite[\S 2]{CLA}
and successfully applied to the classification of finite-dimensional pointed
Hopf algebras over the alternating simple groups \cite{AMPA} and over many of
the sporadic simple groups \cite{MR2745542}.  This paper is a contribution to
this program.

Towards the classification of simple racks of type D, we study an important
family of simple racks: the twisted conjugacy classes of a sporadic simple
group $L$.  Our aim is to classify which of these racks are of type D. For that
purpose, we use the fact that these racks can be realized as conjugacy classes
of the group of automorphisms of $L$.  The main result of our work is the
following theorem.

\begin{thm}
\label{thm:main}
Let $L$ be one of the simple groups
\[
M_{12},\; M_{22},\; J_2,\; J_3, \; Suz,\; HS,\; McL,\; He,\; Fi_{22},\; ON,\; Fi_{24}', \; HN,\; T.
\]
Let $\mathcal{O}$ be a conjugacy class of
$\Aut(L)$ not contained in $L$ which is not listed in
%Tables \ref{tab:notD} and \ref{tab:notknown}.
Table \ref{tab:classes}.
Then $\mathcal{O}$ is of type D.
\end{thm}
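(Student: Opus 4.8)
The plan is to work throughout inside $G=\Aut(L)$, which for every group on the list is either $L$ itself or the index-two extension $L.2$; the conjugacy classes of $G$ not contained in $L$ — the objects of the theorem — are exactly the outer classes, and these are the ones realising the twisted conjugacy classes of $L$. By Remark~\ref{rem:tipoD:inj:proy} type D is inherited from subracks, so the whole problem reduces to producing, for each relevant class $\oc$, a suitable decomposable subrack. Writing the rack operation as $x\trid y=xyx^{-1}$, a direct computation shows that the defining inequality $r\trid(s\trid(r\trid s))\neq s$ for a pair $r,s$ is equivalent to
\[
(rs)^2\neq (sr)^2,
\]
so it is enough to exhibit such a pair lying in a decomposable subrack.

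The concrete criterion I would use is the following: $\oc$ is of type D as soon as there is a subgroup $H\le G$ and elements $r,s\in\oc\cap H$ lying in \emph{distinct} conjugacy classes of $H$ and satisfying $(rs)^2\neq(sr)^2$. Indeed, if $R$ and $S$ denote the two $H$-classes of $r$ and $s$, then $R\trid S\subseteq S$ and $S\trid R\subseteq R$, because conjugation by an element of $H$ keeps everything inside $H$ and inside the respective $H$-class; hence $Y=R\sqcup S$ is a decomposable subrack of $\oc$, and the displayed inequality then witnesses type D. The natural choice is $H=\langle r,s\rangle$, where distinctness of the two $H$-classes is easiest to guarantee, since a smaller subgroup fuses fewer elements.

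The argument then proceeds group by group — this is the reason for the per-group \textbf{Step} blocks attached to $J_3$, $ON$, $McL$, $HN$, $Fi_{24}'$ and the rest — processing each outer class in turn. For a fixed class $\oc$ with representative $x$ I would run through conjugates $s=y\,x\,y^{-1}$, form $H=\langle x,s\rangle$, and test the two conditions with \gap\ and the \rig\ package, reading off class fusion and power maps from the character-table library and using the ATLAS matrix or permutation representations to compute $(xs)^2$ and $(sx)^2$ explicitly; as soon as a witnessing pair appears, the class is certified of type D. A useful economy is to search first inside small, well-understood subgroups (dihedral subgroups generated by two involutions, or maximal subgroups with known character tables), where the ``distinct $H$-classes'' test is cheap. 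The appendix supplies the parallel refinement for the classes sitting inside $L$, which feeds the same bookkeeping.

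The main obstacle is twofold. First, the computation is genuinely heavy for the largest groups on the list — $Fi_{24}'$, $Fi_{22}$, $HN$, $ON$ — where the representations have high degree and a naive exhaustive element search is expensive; organising the search so that a witness is located quickly is where most of the effort goes. Second, and more essentially, the classes gathered in Table~\ref{tab:classes} are precisely those for which no such pair exists: they cannot be disposed of by this criterion and must be excluded. Distinguishing ``no witness found yet'' from ``no witness exists'' for these finitely many classes, and checking exhaustively that the criterion genuinely fails on them, is the delicate endpoint of the argument.
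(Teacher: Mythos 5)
Your proposal follows essentially the same route as the paper: you realise the twisted conjugacy classes as the outer conjugacy classes of $\Aut(L)\simeq L\rtimes\Z_2$, reduce type D to finding $r,s\in\oc$ non-conjugate in a common subgroup with $(rs)^2\neq(sr)^2$ (this is exactly the paper's criterion behind Lemma \ref{lem:usefulresult}), and certify each class by a \gap\ search through maximal subgroups using class fusion, power maps and class multiplication coefficients, which is what the cited Algorithms I and III of \cite{MR2745542} do. The one point where you diverge is peripheral to the statement (which only asserts the positive direction): for the excluded classes of Table \ref{tab:classes} the paper does not run your exhaustive pair search, which would be prohibitively expensive, but instead applies Breuer's Lemma \ref{lem:Breuer_general} to push non-type-D verification down to the few maximal subgroups meeting the class, together with the dihedral criterion of Lemma \ref{prop:tipoD:inv} for involution classes.
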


\begin{table}[h]
\begin{center}
\caption{Classes not of type D}
\label{tab:classes}
\begin{tabular}{|c|c|}
\hline
%\multicolumn{2}{|c|}{Classes not of type D}\tabularnewline
%\hline
$\mathrm{Aut}(M_{22})$ & $\textup{2B}$\tabularnewline
$\mathrm{Aut}(HS)$ & $\textup{2C}$\tabularnewline
$\mathrm{Aut}(Fi_{22})$ & $\textup{2D}$\tabularnewline
$\mathrm{Aut}(J_{3})$ & $\textup{34A, 34B}$\tabularnewline
$\mathrm{Aut}(ON)$ & $\textup{38A, 38B, 38C}$\tabularnewline
$\mathrm{Aut}(McL)$ & $\textup{22A, 22B}$\tabularnewline
$\mathrm{Aut}(Fi_{24}')$ & $\textup{2C}$\tabularnewline
\hline
%\multicolumn{2}{|c|}{Classes not known to be of type D}\tabularnewline
%\hline
%$\mathrm{Aut}(Fi_{24}')$ & $\textup{46A, 46B}$\tabularnewline
%\hline
\end{tabular}
\end{center}
\end{table}

Notice that the groups in Theorem \ref{thm:main} are the only sporadic simple
groups with non-trivial outer automorphism group.
Theorem \ref{thm:main} with \cite{MR2745542} and the lifting method
\cite{MR1659895} imply the following classification result.

\begin{cor}
\label{cor:classification}
Let $L$ be one of the simple groups
\[
M_{12},\; J_2,\; Suz,\; He,\; HN,\; T.
\]
Then $\Aut(L)$ does not have non-trivial finite-dimensional complex pointed
Hopf algebras.\qed
\end{cor}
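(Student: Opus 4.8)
The plan is to deduce the corollary from Theorem~\ref{thm:main}, the Lifting method \cite{MR1659895}, and the results on inner conjugacy classes from \cite{MR2745542}. First I would recall the reduction furnished by the Lifting method: a finite-dimensional complex pointed Hopf algebra $H$ with $G(H)=\Aut(L)$ has an infinitesimal braiding that is a Yetter--Drinfeld module $V$ over $\Aut(L)$ whose Nichols algebra $\toba(V)$ is finite-dimensional. Decomposing $V$ into irreducible Yetter--Drinfeld submodules, each summand is supported on a single conjugacy class $\oc$ of $\Aut(L)$, with braiding determined by a pair $(\oc,q)$ for a $2$-cocycle $q$; and since the Nichols algebra of any Yetter--Drinfeld submodule of $V$ stays finite-dimensional, it suffices to show that every non-trivial conjugacy class of $\Aut(L)$ is of type D. Indeed, by the defining property of racks of type D this would force $\toba$ of each non-trivial summand to be infinite-dimensional, whence $V=0$ and $H=\C[\Aut(L)]$.

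Thus I would reduce the corollary to the assertion that, for $L\in\{M_{12},J_2,Suz,He,HN,T\}$, every non-trivial conjugacy class of $\Aut(L)$ is of type D, and I would split these classes into those not contained in $L$ and those contained in $L$. For the former I would quote Theorem~\ref{thm:main}: the essential bookkeeping point is that each of the six groups appears in the hypothesis list of that theorem, yet none of them occurs in Table~\ref{tab:classes}. Hence for these $L$ there are no exceptions, and every conjugacy class of $\Aut(L)$ not contained in $L$ is of type D.

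For the classes contained in $L$ I would use \cite{MR2745542} together with the refinements collected in the appendix, which establish that every non-trivial conjugacy class of $L$ itself is of type D for each of the six groups. Since $L\trianglelefteq\Aut(L)$, the $\Aut(L)$-class of an element $x\in L$ contains the $L$-class of $x$ as a subrack, and being of type D passes from a subrack to the ambient rack by Remark~\ref{rem:tipoD:inj:proy}; hence each such $\Aut(L)$-class is of type D as well. Combining the two families gives the reduced claim, and the corollary follows.

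The real difficulty is not this assembly but the inputs it consumes: the class-by-class analysis behind Theorem~\ref{thm:main} for the twisted classes, and the verification that \emph{all} inner classes of these particular six groups are of type D, which is precisely where the appendix is needed to dispose of the cases left undecided in \cite{MR2745542}. The one conceptual subtlety to keep in mind is cocycle-independence: type D yields infinite-dimensionality of $\toba(\oc,q)$ for every $q$ simultaneously, so no choice of irreducible representation of the relevant centralizer can restore finite dimension, and the reduction argument goes through uniformly.
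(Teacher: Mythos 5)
Your overall skeleton---the lifting-method reduction to Nichols algebras of irreducible Yetter--Drinfeld modules, and the split of conjugacy classes of $\Aut(L)$ into those outside $L$ (handled by Theorem \ref{thm:main}, correctly observing that none of the six groups occurs in Table \ref{tab:classes}) and those inside $L$---matches the intended deduction, and the subrack step via Remark \ref{rem:tipoD:inj:proy} is fine as far as it goes. But the input you feed into the inner case is false, and the paper's own appendix says so: Table \ref{tab:notD} lists, among the six groups at hand, the classes \textup{2A} of $T$, \textup{11A, 11B} of $M_{12}$, \textup{2A, 3A} of $J_2$, and \textup{3A} of $Suz$ as \emph{not} of type D. So it is not true that every non-trivial conjugacy class of $L$ is of type D for these groups, and your reduction ``it suffices to show that every non-trivial conjugacy class of $\Aut(L)$ is of type D'' cannot be carried out: being of type D is an intrinsic property of the rack, and no appendix refinement or subrack argument will make those classes of type D.

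What the corollary actually consumes from \cite{MR2745542} is stronger than its type-D bookkeeping: the main classification theorem of that paper, which shows that every finite-dimensional pointed Hopf algebra over a sporadic simple group other than $Fi_{22}$, $B$, $M$ is trivial; for the non-type-D classes above, the infinite-dimensionality of the relevant Nichols algebras is established there by different, cocycle-sensitive techniques (real and quasi-real class criteria, abelian subracks and the diagonal classification), not by the rack-theoretic type-D mechanism. Your argument therefore has a genuine gap for the inner classes of $M_{12}$, $J_2$, $Suz$ and $T$: you must replace ``all inner classes are of type D'' by the class-by-class infinite-dimensionality results of \cite{MR2745542}, and check that they apply to a Yetter--Drinfeld module over $\Aut(L)$ restricted to a braided subspace supported on an $L$-class. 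For $He$ and $HN$, which do not occur in Table \ref{tab:notD}, your argument does go through as written.
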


The study of twisted conjugacy classes of sporadic groups is suitable for
being attacked case-by-case with the help of computer calculations.
The strategy for proving Theorem \ref{thm:main} is the same as in
\cite{MR2745541, MR2745542}.  We use the computer algebra system \gap~ to
perform the computations \cite{GAP} \cite{B} \cite{AtlasRep1.4}
\cite{ATLASwww}.
The main scripts and log files of this work can be found in:
\verb+http://www.famaf.unc.edu.ar/~fantino/fv.tar.gz+
or
\verb+http://mate.dm.uba.ar/~lvendram/fv.tar.gz+.

\medskip
The paper is organized as follows. In Section \ref{se:preliminaries} we deal
with the basic definitions and the basic techniques for studying Nichols
algebras over simple racks. The proof of the main result is given in Section
\ref{se:mainresult}.  In the appendix we improve the classification of racks of
type D given in \cite[Table 2]{MR2745542}.  With the exception of the Monster
group, conjugacy classes of type D in sporadic simple groups are classified,
see Remark \ref{rem:M}.

\section{Preliminaries}
\label{se:preliminaries}

We refer to \cite{MR1913436} for generalities about Nichols algebras and to
\cite{MR1994219} for generalities about racks and their cohomologies in the
context of Nichols algebras.  We follow \cite{MR827219} for the notations
concerning the sporadic simple groups.

\medskip
A \textit{rack} is a pair $(X,\trid)$, where $X$ is a non-empty set and
$\trid:X\times X\to X$ is a map (considered as a binary operation on $X$) such
that the map $\varphi_x:X\to X$, $\varphi_x(y)=x\trid y$, is bijective for all
$x\in X$, and $x\trid(y\trid z)=(x\trid y)\trid(x\trid z)$ for all $x,y,z\in
X$.  A \textit{subrack} of a rack $X$ is a non-empty subset $Y\subseteq X$ such
that $(Y,\trid)$ is also a rack. A rack $X$ is said to be a \emph{quandle} if
$x\trid x=x$ for all $x\in X$. All the racks considered in this work are indeed
quandles.

\medskip
A rack $(X,\trid)$ is said to be of \emph{type D} if it contains a decomposable
subrack $Y = R\sqcup S$ such that $r\trid(s\trid(r\trid s)) \neq s$ for some
$r\in R$, $s\in S$.

\begin{rem}
Let $G$ be a group. A conjugacy class $\mathcal{O}$ of $G$ is of type D if and
only if there exist $r,s\in\mathcal{O}$ such that $(rs)^2\ne (sr)^2$ and $r$
and $s$ are not conjugate in the group generated by $r$ and $s$, see
\cite[Subsection 2.2]{MR2745542}.
\end{rem}

\begin{rem}
Racks of type D have the following properties:
\label{rem:tipoD:inj:proy}
\begin{enumerate}[(i)]
\item If $Y \subseteq X$ is a subrack of type D, then $X$ is of type D.
\item If $Z$ is a finite rack and $p:Z\to X$ is an epimorphism, then $X$ of
    type D implies $Z$ of type D.
\end{enumerate}
\end{rem}

The following result is the reason why it is important to study racks of type
D. This theorem is based on  \cite{MR2766176} and \cite{MR2734956}.

\begin{thm}\cite[Thm. 3.6]{AMPA}
\label{thm:AMPA}
Let $X$ be a finite rack of type D. Then the Nichols algebra associated with
the pair $(X,q)$ is infinite-dimensional for all $2$-cocycles $q$. \qed
\end{thm}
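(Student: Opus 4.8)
The plan is to reduce the statement to a rank-two computation governed by the theory of Weyl groupoids, exploiting the subrack structure furnished by the type D hypothesis. First I would record the Nichols-algebra version of Remark \ref{rem:tipoD:inj:proy}(i): if $Y\subseteq X$ is a subrack, then $\ka Y$ is a braided vector subspace of $\ka X$ for the braiding $c(x\otimes y)=q_{x,y}\,(x\trid y)\otimes x$, and the subalgebra of $\toba(\ka X)$ generated by $\ka Y$ is precisely $\toba(\ka Y)$. Hence $\dim\toba(Y,q|_Y)=\infty$ forces $\dim\toba(X,q)=\infty$. Thus it suffices to treat the decomposable subrack $Y=R\sqcup S$ provided by the definition of type D, together with the distinguished elements $r\in R$, $s\in S$ satisfying $r\trid(s\trid(r\trid s))\neq s$.

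Next I would analyze the braided vector space $V=\ka Y$. Because $Y=R\sqcup S$ is decomposable, $V$ splits as $\ka R\oplus\ka S$ with $R\trid S\subseteq S$ and $S\trid R\subseteq R$, so the two summands interact only through the off-diagonal blocks of $c$. The heart of the argument is to extract from this configuration a two-dimensional braided vector subspace of diagonal type. Concretely, one passes to the Yetter--Drinfeld module over the group $\langle\varphi_r,\varphi_s\rangle$ and considers the orbit of $\{r,s\}$ under $\varphi_r,\varphi_s$; the inequality $r\trid(s\trid(r\trid s))\neq s$ --- equivalently, in the conjugacy-class realization, $(rs)^2\neq(sr)^2$ --- guarantees that the braiding on this piece is genuinely non-symmetric, so that after diagonalizing one obtains a $2\times 2$ braiding matrix $(q_{ij})$ whose off-diagonal product is nontrivial.

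Finally I would invoke the rank-two classification of Nichols algebras with finite root system \cite{MR2766176} together with the Weyl-groupoid criterion of \cite{MR2734956}: I would show that the generalized root system attached to $(q_{ij})$ is infinite, so that $\toba$ of this two-dimensional braided subspace --- and hence $\toba(X,q)$ --- is infinite-dimensional. The main obstacle is exactly this last reduction carried out uniformly in $q$: one must check that for \emph{every} $2$-cocycle the type D condition produces a braiding matrix lying outside the finite list of \cite{MR2766176}, rather than for a single favourable cocycle. Establishing that the failure $r\trid(s\trid(r\trid s))\neq s$ is robust enough to survive all choices of $q$, and isolating the precise two-dimensional subspace on which the root system can be computed, is where the real work lies; the subrack reduction of the first step is routine, and the group-theoretic translation of the type D condition is the tool that makes this robustness transparent.
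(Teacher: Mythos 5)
There is a genuine gap, and it sits exactly where you located ``the real work'': the reduction to a two-dimensional braided subspace of diagonal type cannot be carried out. For $\ka\{r,s\}$ to be a braided subspace of $\ka Y$ you need $\varphi_r(s)\in\{r,s\}$ and $\varphi_s(r)\in\{r,s\}$; since $\varphi_r$ is injective and fixes $r$ (the racks here are quandles), this forces $r\trid s=s$ and $s\trid r=r$. But then $r\trid(s\trid(r\trid s))=r\trid(s\trid s)=r\trid s=s$, i.e.\ precisely the pair witnessing type D can \emph{never} span a braided subspace, diagonal or otherwise. The same obstruction appears group-theoretically: diagonal braided subspaces inside a conjugacy-class realization come from abelian subracks, and the condition $(rs)^2\neq(sr)^2$ forces $r$ and $s$ not to commute, so no abelian subrack contains both. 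Passing to the orbit of $\{r,s\}$ under $\langle\varphi_r,\varphi_s\rangle$ does not help: that orbit carries a braiding of rack type over a non-abelian group, which is genuinely non-diagonal, and Heckenberger's rank-two classification of \emph{diagonal} braidings is not the theorem the cited references prove.

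The actual proof (this paper only cites \cite[Thm.~3.6]{AMPA}; the argument is given there) keeps the two blocks whole rather than shrinking them to lines. After your (correct) subrack reduction to $Y=R\sqcup S$, one realizes $(\ka Y, c^q)$ as a Yetter--Drinfeld module over a suitable finite group $G$ (a finite quotient of the enveloping group of $Y$), in which the decomposition $Y=R\sqcup S$ yields a \emph{semisimple} decomposition $M_1\oplus M_2$ with supports $R$ and $S$. The results of \cite{MR2766176} and \cite{MR2734956} are rank-two statements about exactly this non-abelian situation: if $\dim\toba(M_1\oplus M_2)<\infty$, then the associated Weyl groupoid is finite, which forces $(gh)^2=(hg)^2$ for all $g$ in the support of $M_1$ and $h$ in the support of $M_2$. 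The type D condition $r\trid(s\trid(r\trid s))\neq s$, i.e.\ $(rs)^2\neq(sr)^2$, contradicts this, so $\dim\toba(\ka Y)=\infty$. Note that the cocycle $q$ enters only through the choice of realization, while the criterion $(rs)^2\neq(sr)^2$ is purely rack-theoretic; this is why uniformity in $q$ --- the issue you flagged as the main obstacle --- is automatic in the correct argument rather than something to be checked cocycle by cocycle.
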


A rack is \emph{simple} if it has no quotients except itself and the
one-element rack.  We recall the classification of finite simple racks given in
\cite[Theorems 3.9 and 3.12]{MR1994219}, see also \cite{MR682881}.  A finite
simple rack belongs to one of the following classes:
\begin{enumerate}
\renewcommand{\theenumi}{\alph{enumi}}\renewcommand{\labelenumi}{(\theenumi)}%roman
\item \label{item:sar} simple affine racks;
\item non-trivial conjugacy classes of non-abelian finite simple groups;
\item \label{item:tcc} non-trivial twisted conjugacy classes of non-abelian finite simple groups;
\item \label{item:thr} simple twisted homogeneous racks.
\end{enumerate}

In this paper we study non-trivial twisted conjugacy classes of type D of
sporadic simple groups. These racks belong to the class \eqref{item:tcc}
mentioned above.

\subsection{Twisted conjugacy classes}
\label{subsec:TCC}

Let $G$ be a finite group and $u\in \Aut (G)$. The group $G$ acts on itself by
$y\rightharpoonup_u x = y\,x\,u(y^{-1})$ for all $x,y\in G$.  The orbit of $x$
under this action will be called the \emph{$u$-twisted conjugacy class} of $x$
and it will be denoted by $\oc^{G,u}_x$.
It is easy to prove that the orbit $\oc^{G,u}_x$ is a rack with
\begin{equation*}\label{eqn:twisted-homogeneous}
y\trid_u z = y\,u(z\,y^{-1})
\end{equation*}
for all $y,z\in \oc^{G,u}_x$.
Notice that $\oc_x^{G,\id}$ is a conjugacy class in G.

We write $\Out(G):=\Aut(G)/\Inn(G)$ for the group of outer automorphisms of $G$
and $\pi:\Aut(G)\to\Out(G)$ for the canonical surjection.

Assume that $\Out(G)\ne1$. Let $u\in\Aut(G)$ such that $\pi(u)\ne1$.
Every $u$-twisted conjugacy class in $G$ is isomorphic (as a rack) to a
conjugacy class in the semidirect product
$G\rtimes\langle u\rangle$.
Indeed,
\begin{align*}
\label{eq:CC:TCC}
\oc_{(x,u)}^{G\rtimes \langle u \rangle,\mathrm{id}} = \oc^{G,u}_x \times \{u\}
\end{align*}
for all $x\in G$.
Therefore the problem of determining $u$-twisted conjugacy classes of type D in
$G$ can be reduced to study conjugacy classes of type D in $G\rtimes \langle u
\rangle$ and contained in $G\times \{u \}$.

\subsection{Conjugacy classes to study}
\label{tostudy}
Let $L$ be one of the simple groups
\[
M_{12},\; M_{22},\; J_2,\; J_3, \; Suz,\; HS,\; McL,\; He,\; Fi_{22},\; ON,\; Fi_{24}', \; HN,\; T.
\]
It is well-known that $\Aut(L)\simeq L\rtimes\Z_2$ \cite{MR827219}. Hence,
since $L$ is a normal subgroup of $L\rtimes\Z_2$, it is possible to compute the
list of conjugacy classes of $\Aut(L)$ not contained in $L$ from the character
table of $\Aut(L)$, see for example \cite{MR2270898}. For that purpose, we use
the \gap~function \verb+ClassPositionsOfDerivedSubgroup+. See the file
\verb+classes.log+ for the information concerning the conjugacy classes of
$\Aut(L)$ not contained in $L$.

\subsection{Strategy}
Our aim is to classify twisted conjugacy classes of sporadic simple groups of
type D.  By Subsection \ref{subsec:TCC}, we need to consider the conjugacy
classes in $\Aut(L)\setminus L$, where $L$ is a sporadic simple group with
$\Out(L)\ne1$. The strategy for studying these conjugacy classes is essentially
based on studying conjugacy classes of type D in maximal subgroups of
$\Aut(L)$. See \cite[Subsection 1.1]{MR2745541} for an exposition about
the algorithms used.

\subsection{Useful lemmata}
Let $G$ be a non-abelian group and $g\in G$. We write $g^G$ for the conjugacy class of $g$ in $G$. Let
\[
\mathfrak{M}_{g}=\{M:M\text{ is a maximal subgroup of \ensuremath{G} and \ensuremath{g^{G}\cap M\ne\emptyset\}}}.
\]

\begin{lem}[Breuer]
\label{lem:Breuer_general}Assume that for all $M\in\mathfrak{M}_{g}$
there exists $m\in M$ such that
\[
g^{G}\cap M\subseteq m^{M}\subseteq g^{G}.
\]
If $g^{G}$ is of type D, then there exist $N\in\mathfrak{M}_{g}$
and $n\in N$ such that $n{}^{N}$ is of type D.
\end{lem}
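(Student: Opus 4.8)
The plan is to work directly from the characterization of type D for conjugacy classes recalled above: a class $g^G$ is of type D precisely when it contains elements $r,s$ with $(rs)^2\neq(sr)^2$ and such that $r$ and $s$ are not conjugate in $\langle r,s\rangle$. So I would start by fixing such a witnessing pair $r,s\in g^G$ and focus attention on the subgroup $H=\langle r,s\rangle$ that they generate. The whole argument then consists in relocating this pair into a single maximal subgroup and verifying that it continues to witness type D there.

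First I would observe that $H$ is necessarily a \emph{proper} subgroup of $G$. Indeed, $r$ and $s$ both lie in $g^G$, so they are conjugate in $G$; on the other hand the type D hypothesis asserts that they are not conjugate in $H=\langle r,s\rangle$. Were $H$ all of $G$, these two statements would contradict one another, so $H\neq G$. Consequently $H$ is contained in some maximal subgroup $M$ of $G$, and since $r\in g^G\cap M$ we have $M\in\mathfrak{M}_g$; thus the standing hypothesis applies to $M$.

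Next I would invoke that hypothesis: there is $m\in M$ with $g^G\cap M\subseteq m^M\subseteq g^G$. Since $r,s\in H\subseteq M$ and $r,s\in g^G$, both $r$ and $s$ belong to $g^G\cap M$, hence to the single $M$-class $m^M$. I would then set $N=M$ and $n=m$ and claim that $n^N=m^M$ is of type D, using the very same pair $r,s$. The inequality $(rs)^2\neq(sr)^2$ is unchanged, and the condition that $r,s$ be non-conjugate in $\langle r,s\rangle$ is intrinsic to the abstract group $\langle r,s\rangle$, so it is inherited verbatim whether we regard $r,s$ as sitting inside $G$ or inside $M$. Hence $r,s\in m^M$ witness type D for $n^N$, which is exactly the desired conclusion.

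The step that most requires care is the properness of $H$: it is what guarantees the existence of a maximal subgroup housing the witnessing pair, and it is precisely the place where the two defining conditions of type D are played against each other. Once $H\subsetneq G$ is secured, the inclusion $g^G\cap M\subseteq m^M$ forces $r$ and $s$ into a single $M$-class while $m^M\subseteq g^G$ keeps that class inside $g^G$, so the transfer of the witnesses is automatic and no further computation is needed.
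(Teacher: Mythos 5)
Your proof is correct and takes essentially the same route as the paper's: both extract a witnessing pair $r,s\in g^G$ non-conjugate in $H=\langle r,s\rangle$, deduce that $H$ is proper (hence lies in some $M\in\mathfrak{M}_g$), and then use the hypothesis $g^G\cap M\subseteq m^M\subseteq g^G$ to see that $r,s$ land in a single $M$-class where they still witness type D. Your write-up merely makes explicit the properness argument and the intrinsic nature of non-conjugacy in $\langle r,s\rangle$, which the paper leaves terse.
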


\begin{proof}
Since $g^{G}$ is of type D, there exist $r,s\in g^{G}$ such that
$(rs)^{2}\ne(sr)^{2}$ and $r^{H}\cap s^{H}=\emptyset$ for $H=\langle
r,s\rangle$.  Since $r^{H}\cap s^{H}=\emptyset$ and $r^{H}\cup s^{H}\subseteq
g^{G}$, the group $H$ is contained in some maximal subgroup
$N\in\mathfrak{M}_{g}$.  Hence $r,s\in g^{G}\cap N\subseteq n^{N}$ for some
$n\in N$ and the claim follows.
\end{proof}

%\begin{lem}
%Assume that for all $x,y\in g^{G}$ such that $x$ and $y$ are not
%conjugate in the subgroup $\langle x,y\rangle$ the group $\langle x,y\rangle$
%is abelian. Then $g^{G}$ is not of type D.\end{lem}
%
%\begin{proof}
%Assume that $g^{G}$ is of type D. Then $\langle r,s\rangle$ is not
%abelian because $(rs)^{2}\ne(sr)^{2}$. From this the claim follows.
%\end{proof}

\begin{lem}
\label{lem:usefulresult}
	Let $\mathcal{O}$ be a conjugacy class of $G$ and let $H$ be a subgroup
	of $G$ such that $\mathcal{O}$ contains two conjugacy classes $\oc_1$,
	$\oc_2$ of $H$. Assume that there exist $r\in\oc_1$ and $s\in\oc_2$
	such that $(rs)^2$ does not belong to the centralizer of $r$ in $G$.
	Then $\mathcal{O}$ is of type D.
\end{lem}

\begin{proof}
Notice that $(rs)^2=(sr)^2$ if and only if $(rs)^2$ commutes with $r$. Then the
claim follows.
\end{proof}

\begin{lem}\label{prop:tipoD:inv}
Let $\oc$ be a conjugacy class of involutions of $G$. Then $\oc$ is of type D
if and only if there exist $r$, $s\in \oc$ such that the order of $rs$ is even
and greater or equal to 6.
\end{lem}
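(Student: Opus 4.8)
The plan is to deduce everything from the group-theoretic reformulation of type D recalled above: $\oc$ is of type D if and only if there exist $r,s\in\oc$ with $(rs)^2\ne(sr)^2$ such that $r$ and $s$ are not conjugate in $H:=\langle r,s\rangle$. Since $r$ and $s$ are involutions, $H$ is a dihedral group: writing $a:=rs$ and letting $n$ denote the order of $a$, one has $sas^{-1}=s(rs)s=sr=(rs)^{-1}=a^{-1}$, so $H=\langle a,s\mid a^n=s^2=1,\ sas^{-1}=a^{-1}\rangle$ has order $2n$ and its reflections are exactly the elements $a^is$ with $0\le i\le n-1$. Note that $s=a^0s$ and $r=as^{-1}=a^1s$. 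The whole statement will then reduce to translating the two conditions above into numerical conditions on $n$.

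First I would handle the condition $(rs)^2\ne(sr)^2$. Because $sr=(rs)^{-1}=a^{-1}$, we have $(sr)^2=a^{-2}$ and $(rs)^2=a^2$, so $(rs)^2\ne(sr)^2$ holds precisely when $a^4\ne1$, i.e. when $n$ does not divide $4$; equivalently $n\notin\{1,2,4\}$.

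Next I would determine when $r$ and $s$ are conjugate in $H$, and this is the only place requiring a small computation. Conjugating a reflection by $a$ gives $a(a^is)a^{-1}=a^{i+2}s$ (using $sa^{-1}=as$), while conjugating by another reflection $a^js$ gives $(a^js)(a^is)(a^js)^{-1}=a^{2j-i}s$; in both cases the index $i$ changes by an even amount. Hence the conjugacy class of $a^is$ in $H$ consists of those $a^{i'}s$ with $i'\equiv i+2k\pmod n$ for some $k$. If $n$ is odd then $2$ is invertible modulo $n$, all reflections are conjugate, and $r,s$ lie in the same class; if $n$ is even the reflections split into two classes according to the parity of the index, and then $s=a^0s$ (even index) and $r=a^1s$ (odd index) are not conjugate. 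Thus $r$ and $s$ are non-conjugate in $H$ exactly when $n$ is even.

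Finally I would combine the two criteria: $\oc$ is of type D if and only if some pair of involutions $r,s\in\oc$ satisfies both $n\notin\{1,2,4\}$ and $n$ even, that is $n$ even and $n\ge6$, which is the asserted condition that the order of $rs$ be even and at least $6$. I do not expect a serious obstacle here; the only subtle point is the count of conjugacy classes of reflections in the dihedral group $H$, which hinges entirely on the parity of $n$.
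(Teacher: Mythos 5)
Your proof is correct and follows essentially the same route as the paper: the paper's proof simply notes that $\langle r,s\rangle\simeq\mathbb{D}_n$ where $n=|rs|$ and cites \cite[\S 1.8]{MR2745542} for the dihedral analysis, which is exactly the computation you carry out in full (the reformulation via $(rs)^2\ne(sr)^2$ plus non-conjugacy in $\langle r,s\rangle$, the parity criterion for conjugacy of reflections, and the exclusion of $n\in\{1,2,4\}$). Your write-up just makes self-contained what the paper delegates to the reference.
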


\begin{proof}
Assume that $|rs|=n$. Then $\langle r,s\rangle\simeq\mathbb{D}_n$ and the claim
follows; see \cite[\S 1.8]{MR2745542}.
%The other implication follows from \cite[Lemma 1.14]{MR2745542}.
%Let $r$, $s$ be in $\oc$ and define $n=|rs|$, the order of $rs$. We claim that $H:=\langle r,s\rangle$ is cyclic of order $n$ or is (isomorphic to) the dihedral group of $2n$ elements.
%Indeed, if we define $y:=rs$, then $r$ and $y$ verify the relations $r^2=1=y^n$ and $ry=y^{-1}r$. Since $H=\langle r,y\rangle $, the index of $\langle y\rangle$ in $H$ is 1 or 2, and the claim follows.
%Now, the proposition follows using the fact that $x$ and $xy$ are not conjugate in $\mathbb D_n=\langle x,y \,| \,x^2=1=y^n,xy=y^{-1}x\rangle$, the dihedral group of $2n$ elements, if and only if $n$ is even. Notice that in the case $n=4$, $(rs)^2= (rs)^{-2}=(sr)^{2}$.
%Assume that $\oc$ is of type D. Then, there exist $r$, $s\in \oc$ such that
%\begin{enumerate}
%  \item \label{enu:tipoD:i} $(rs)^2\neq (sr)^2$, and
%  \item \label{enu:tipoD:ii} $r$ and $s$ are not conjugate in $H:=\la r,s\ra$.
%\end{enumerate}
%Hence, \eqref{enu:tipoD:i} implies that $n:=|rs|>2$ and that $H\simeq \mathbb D_n$. If $n$ is odd, then $r$ and $s$ are conjugate in $H$, which is a contradiction by \eqref{enu:tipoD:ii}.
\end{proof}

\section{Proof of Theorem \ref{thm:main}}
\label{sec:TTC:espo}
\label{se:mainresult}

The claim concerning the automorphism groups of $M_{12}$ and $J_2$ follows from
the application of \cite[Algorithm I]{MR2745542}. The claim for the
automorphism groups of $M_{22}$, $Suz$, $HS$, $He$, $Fi_{22}$
and $T$ follows from the application of \cite[Algorithm III]{MR2745542}.
There is one log file for each of these groups, see Table \ref{tab:logs}.  The
automorphism groups of $J_3$, $ON$, $McL$, $HN$ and $Fi_{24}'$ are studied in Subsections
\ref{subsec:ON_McL_J3}, \ref{subsec:HN} and
\ref{subsec:Fi24'}, respectively.

\begin{table}[ht]
\begin{center}
\caption{Log files}\label{tab:logs}
\begin{tabular}{|l|l|l|l|}
\hline
$L$ & log file & $L$ & log file\tabularnewline
\hline
$M_{12}$ & \verb+M12.2.log+ & $HS$ & \verb+HS.2.log+\tabularnewline
%\hline
$M_{22}$ & \verb+M22.2.log+ & $He$ & \verb+He.2.log+\tabularnewline
%\hline
$J_{2}$ & \verb+J2.2.log+ & $Fi_{22}$ & \verb+Fi22.2.log+\tabularnewline
%\hline
%$J_{3}$ & \verb+J3.2.log+\tabularnewline
%\hline
$Suz$ & \verb+Suz.2.log+ & $T$ & \verb+T.2.log+\tabularnewline
\hline
\end{tabular}
\end{center}
\end{table}

\subsection{The groups $\Aut(J_3)$, $\Aut(ON)$ and $\Aut(McL)$}
\label{subsec:ON_McL_J3}

\begin{lem}\
	\begin{enumerate}
		\item A conjugacy class $\mathcal{O}$ of $\Aut(J_3)\setminus J_3$ is of
			type D if and only if $\mathcal{O}\notin\{\textup{34A, 34B}\}$.
		\item A conjugacy class $\mathcal{O}$ of $\Aut(ON)\setminus ON$ is of type
			D if and only if $\mathcal{O}\notin\{\textup{38A, 38B, 38C}\}$.
		\item A conjugacy class $\mathcal{O}$ of $\Aut(McL)\setminus McL$ is of
			type D if and only if $\mathcal{O}\notin\{\textup{22A, 22B}\}$.
	\end{enumerate}
\end{lem}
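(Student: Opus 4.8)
The plan is to treat the three groups case by case, since each claim has two directions: showing that the listed exceptional classes are \emph{not} of type D, and showing that every other class in $\Aut(L)\setminus L$ \emph{is} of type D. For each of $J_3$, $ON$, $McL$ I would first fix a concrete model of $\Aut(L)\simeq L\rtimes\Z_2$ from the GAP character-table library and extract the conjugacy classes not contained in $L$ via \verb+ClassPositionsOfDerivedSubgroup+, exactly as described in Subsection \ref{tostudy}. The main engine of the positive direction is Lemma \ref{lem:Breuer_general}: for a class $\oc=g^{\Aut(L)}$ I would compute the set $\mathfrak{M}_g$ of maximal subgroups of $\Aut(L)$ meeting $\oc$, check the fusion hypothesis that $g^{\Aut(L)}\cap M\subseteq m^M\subseteq g^{\Aut(L)}$ for a suitable $m\in M$, and then reduce the question of $\oc$ being of type D to finding a single maximal subgroup $N$ containing a class $n^N$ of type D. This is precisely the strategy of \cite[Algorithm III]{MR2745542}, run here on the larger groups $J_3$, $ON$, $McL$.

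For the classes of involutions that arise, I would invoke Lemma \ref{prop:tipoD:inv}: such a class is of type D exactly when two of its elements have a product of even order $\ge 6$, which is a quick search over class-multiplication data or over representatives inside a convenient maximal subgroup. For the remaining non-involutory classes I would use Lemma \ref{lem:usefulresult}: locate a subgroup $H$ (typically a maximal subgroup already in $\mathfrak{M}_g$) in which $\oc$ splits into at least two $H$-classes $\oc_1,\oc_2$, pick representatives $r\in\oc_1$, $s\in\oc_2$, and verify that $(rs)^2$ does not lie in the centralizer $C_G(r)$; by the lemma this certifies $\oc$ is of type D. The bulk of the work is organizing, for each non-exceptional class, either a witnessing pair $(r,s)$ or a witnessing maximal subgroup, and recording this in the accompanying log files.

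For the negative direction I must show the specific classes $\{34A,34B\}$ in $\Aut(J_3)$, $\{38A,38B,38C\}$ in $\Aut(ON)$, and $\{22A,22B\}$ in $\Aut(McL)$ are \emph{not} of type D. Here the criterion in the first Remark is the tool: a class fails to be of type D iff for \emph{every} pair $r,s\in\oc$ one has $(rs)^2=(sr)^2$ or $r,s$ are conjugate in $\langle r,s\rangle$. In practice these exceptional classes have elements of large order whose centralizers and generated subgroups are highly constrained, so one checks directly that for all pairs of representatives the subgroup $\langle r,s\rangle$ either fails the non-conjugacy condition or yields $(rs)^2=(sr)^2$; for the order-$34$ and order-$38$ classes this follows from the thinness of the admissible dihedral-type subgroups they generate.

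The hard part will be the negative direction, namely ruling out type D for the exceptional classes. Verifying that a class \emph{is} of type D only requires exhibiting one good witness, but proving a class is \emph{not} of type D is a universally quantified statement over all pairs $r,s\in\oc$, which cannot be settled by a single example and instead demands a complete analysis of the subgroups $\langle r,s\rangle$ that can arise. The saving grace is that for these particular high-order classes the possible products $rs$ and the structure of $\langle r,s\rangle$ are so limited that the verification, while delicate, reduces to a finite and computer-checkable set of cases; the details are carried out in the scripts and log files accompanying the paper.
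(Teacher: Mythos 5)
You have the role of Lemma \ref{lem:Breuer_general} exactly backwards, and this creates a genuine gap in the hard (negative) direction. Breuer's lemma says: \emph{if} $g^{G}$ is of type D, \emph{then} some $N\in\mathfrak{M}_{g}$ has a class $n^{N}$ of type D. It therefore proves nothing in the positive direction; exhibiting a maximal-subgroup class of type D inside $\mathcal{O}$ certifies that $\mathcal{O}$ is of type D via Remark \ref{rem:tipoD:inj:proy}(i) (a subrack of type D forces the ambient rack to be of type D), not via Lemma \ref{lem:Breuer_general}. That part of your argument is salvageable simply by citing the correct fact, and it then agrees with what the paper does (Algorithm III of the cited reference, plus Lemmas \ref{lem:usefulresult} and \ref{prop:tipoD:inv}).

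The real problem is the negative direction, which you yourself identify as the hard part. You propose to verify, for \emph{every} pair $r,s$ in the classes \textup{34A, 34B} of $\Aut(J_3)$, \textup{38A, 38B, 38C} of $\Aut(ON)$ and \textup{22A, 22B} of $\Aut(McL)$, that $(rs)^2=(sr)^2$ or that $r,s$ are conjugate in $\langle r,s\rangle$, asserting without justification that this ``reduces to a finite and computer-checkable set of cases.'' These classes have roughly between $10^{6}$ and $10^{10}$ elements (e.g.\ $|\Aut(ON)|\approx 9\cdot 10^{11}$), and the condition involves the subgroup $\langle r,s\rangle$, which cannot be read off from character-theoretic data; a brute-force check is not feasible, and you supply no reduction mechanism. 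The paper's reduction mechanism is precisely the contrapositive of Breuer's lemma, i.e.\ the lemma used where you did not use it: if $r,s$ were a type-D witness, then $r^{H}\cap s^{H}=\emptyset$ for $H=\langle r,s\rangle$, so $H$ is a \emph{proper} subgroup, hence contained in some $M\in\mathfrak{M}_{g}$; under the fusion hypothesis $g^{G}\cap M\subseteq m^{M}\subseteq g^{G}$, it then suffices to check that the relevant classes of the very few maximal subgroups containing elements of order $34$, $38$, $22$ (e.g.\ only $\mathbf{PSL}(2,17)\times\Z_{2}$ for order $34$ in $\Aut(J_3)$) are not of type D --- a small, genuinely computer-checkable task. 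Your vague appeal to ``thinness of the admissible dihedral-type subgroups'' is not a substitute; moreover, dihedral-subgroup analysis (Lemma \ref{prop:tipoD:inv}) applies only to classes of involutions, not to elements of order $22$, $34$ or $38$.
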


\begin{proof}
	We first prove (1). We claim that the classes \textup{34A, 34B} are not of
	type D.  Let $G=\Aut(J_3)$ and let $g$ be a representative of the conjugacy
	class 34A (the proof for the class 34B is analogous). By \cite{MR827219}, the
	only maximal subgroup containing elements of order 34 is
	$\mathcal{M}_{4}\simeq\mathbf{PSL}(2,17)\times\mathbb{Z}_{2}$.  Further, it
	is easy to see that $\mathcal{M}_{4}\in\mathfrak{M}_{g}$ satisfies $g^{G}\cap
	\mathcal{M}_{4}\subseteq m^{\mathcal{M}_{4}}\subseteq g^{G}$ for some $m\in
	\mathcal{M}_{4}$ and the class $m^{\mathcal{M}_{4}}$ is not of type D. Hence
	Lemma \ref{lem:Breuer_general} applies. See the file \verb+J3.2/34AB.log+ for
	more information.  To prove that the remaining conjugacy classes are of type
	D, apply \cite[Algorithm III]{MR2745542}. See the file \texttt{J3.2/J3.2.log}
	for more information.

	Now we prove (2). We claim that the classes \textup{38A, 38B, 38C} of
	$\Aut(ON)$ are not of type D.  The only maximal subgroup (up to conjugation)
	of $\Aut(ON)$ containing elements of order 38 is the second maximal subgroup
	$\mathcal{M}_2$.  By Lemma \ref{lem:Breuer_general}, it suffices to prove
	that the classes 38a, 38b, 38c of $\mathcal{M}_2$ are not of type D.  This
	follows from a direct \gap~computation.  See the file \verb+ON.2/38ABC.log+
	for more information.  To prove that the remaining conjugacy classes are of
	type D we apply \cite[Algorithm III]{MR2745542}. See the file
	\texttt{ON.2/ON.2.log} for more information.

	Now we prove (3). We claim that the classes \textup{22A, 22B} of $\Aut(McL)$
	are not of type D.  The only maximal subgroup (up to conjugation) of
	$\Aut(McL)$ containing elements of order 22 is the 8th maximal subgroup
	$\mathcal{M}_8$.  By Lemma \ref{lem:Breuer_general}, it suffices to prove
	that the classes 22a, 22b of $\mathcal{M}_8$ are not of type D. This follows
	from a direct \gap~computation.  See the file \verb+McL.2/22AB.log+ for more
	information.  To prove that the remaining conjugacy classes are of type D,
	apply \cite[Algorithm III]{MR2745542}. See the file \texttt{McL.2/McL.2.log}
	for more information.
\end{proof}

\subsection{The group $\Aut(HN)$}
\label{subsec:HN}

\begin{lem}
\label{lem:HN}
All the conjugacy classes in $\Aut(HN)\setminus HN$ are of type D.
\end{lem}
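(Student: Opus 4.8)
The plan is to treat each conjugacy class of $\Aut(HN)$ lying outside $HN$ separately, applying the criteria assembled in the useful lemmata above. First I would obtain the list of such classes from the character table of $\Aut(HN)$ in the GAP library, extracting the positions of the outer classes with the function \verb+ClassPositionsOfDerivedSubgroup+ exactly as explained in Subsection \ref{tostudy}. Since $\Aut(HN)\simeq HN\rtimes\Z_2$, these are precisely the classes contained in the nontrivial coset $\Aut(HN)\setminus HN$, so there is a single coset to sweep through.

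For the bulk of the classes $\oc$ I would use Lemma \ref{lem:usefulresult}: the goal is to locate a subgroup $H\leq\Aut(HN)$ for which $\oc\cap H$ breaks into two distinct $H$-classes $\oc_1,\oc_2$, and then to find representatives $r\in\oc_1$ and $s\in\oc_2$ with $(rs)^2$ not centralizing $r$. The natural candidates for $H$ are the maximal subgroups of $\Aut(HN)$ that meet $\oc$; passing to such an $H$ keeps the ambient group small enough that the search for a witnessing pair $(r,s)$ is a direct computation, and the decomposable subrack $\oc_1\sqcup\oc_2$ then certifies type D via Remark \ref{rem:tipoD:inj:proy}(i). For any class of involutions I would instead invoke Lemma \ref{prop:tipoD:inv}, reducing the question to whether two members of $\oc$ can have product of even order at least $6$, which is visible from the power maps and structure constants of $\Aut(HN)$. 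Should a class fail to split inside every maximal subgroup it meets, I would fall back on Lemma \ref{lem:Breuer_general} to relate its type D property to that of a class in a maximal subgroup and analyze the latter recursively.

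The main obstacle is that the statement asserts there are \emph{no} exceptional classes, so every outer class must be disposed of successfully, with no room to set hard cases aside. The delicate cases are the classes of large element order, whose centralizers are small and which typically lie in only one or two maximal subgroups: for these, verifying that the class genuinely splits in some accessible subgroup and exhibiting an explicit pair $(r,s)$ satisfying the hypotheses of Lemma \ref{lem:usefulresult} is where the real computational effort lies. This is the content of Algorithm III of \cite{MR2745542}, which I would execute on $\Aut(HN)$ and whose output I would record in the accompanying log file.
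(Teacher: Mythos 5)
Your proposal is correct and follows essentially the same route as the paper: the paper's proof of Lemma \ref{lem:HN} likewise sweeps through the outer classes by computing the fusion of conjugacy classes of selected maximal subgroups ($\mathcal{M}_2$, $\mathcal{M}_3$, $\mathcal{M}_7$, $\mathcal{M}_9$, $\mathcal{M}_{13}$) into $\Aut(HN)$ and certifying type D inside those subgroups by \textsf{GAP} computation, exactly in the spirit of Algorithm III of \cite{MR2745542} that you invoke. The one point where you genuinely diverge is the involution class \textup{2C}. You propose to apply Lemma \ref{prop:tipoD:inv} directly in $\Aut(HN)$, checking via structure constants whether some $r,s\in\textup{2C}$ have $|rs|$ even and at least $6$; since Lemma \ref{prop:tipoD:inv} is an equivalence and \textup{2C} is in fact of type D, this check is guaranteed to succeed and your argument closes. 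The paper instead argues conceptually: three distinct involution classes of the maximal subgroup $\mathcal{M}_2\simeq\Sym_{12}$ fuse into \textup{2C}, and by \cite[Thm.~4.1]{AMPA} the transpositions form the \emph{unique} involution class of $\Sym_{12}$ not of type D, so at least two of those three classes are of type D and hence so is \textup{2C}. Your version buys a uniform, purely character-theoretic verification (one \texttt{ClassMultiplicationCoefficient} loop in the ambient group); the paper's version buys a computation-free reason resting on the known classification of involution classes of type D in symmetric groups. Both are valid, and the rest of your plan (Lemma \ref{lem:usefulresult} for split classes, Lemma \ref{lem:Breuer_general} as a fallback for classes meeting few maximal subgroups) matches the paper's strategy, with the minor remark that for $\Aut(HN)$ no fallback to Lemma \ref{lem:Breuer_general} turns out to be needed since every outer class is of type D.
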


\begin{proof}
With \gap~ it is possible to obtain the information related to the fusion of
the conjugacy classes from the maximal subgroups of $\Aut(HN)$ into $\Aut(HN)$.
%We split the proof into several steps. There is one logfile for each step.  The
%fusion of conjugacy classes and all files concerning this proof belong to the
%folder \texttt{HN.2}.
The following table shows the
maximal subgroup (and the log file) used and the conjugacy classes of $\Aut(HN)$
of type D.
\begin{center}
\begin{tabular}{|c|c|}
	\hline
 File & Classes \tabularnewline
 \hline
 $\verb+M2+$ & \textup{4D,4E,4F,6D,6E,6F,8C,8D,10G,10H,12D} \tabularnewline
 & \textup{12E,14B,18A,20F,24A,28A,30C,42A,60A} \tabularnewline
 \hline
 $\verb+M13+$ & \textup{8F,24B,24C} \tabularnewline
 \hline
 $\verb+M9+$ & \textup{8E} \tabularnewline
 \hline
 $\verb+M7+$ & \textup{20E,20G,20H,20I,40B,40C,40D}\tabularnewline
 \hline
 $\verb+M3+$ & \textup{44A,44B}\tabularnewline
 \hline
\end{tabular}
\end{center}

It remains to prove that the class \textup{2C} is of type D.  By \cite[Thm.
4.1]{AMPA}, the class of transpositions in $\Sym_{12}$ is the unique class of
involutions which is not of type D.  But there are three different conjugacy
classes of involutions of the maximal subgroup $\mathcal{M}_2\simeq\Sym_{12}$
contained in the class \textup{2C} of $\Aut(HN)$ and hence the latter is of
type D.
%\begin{step-hn}
%First we prove that the classes \textup{2C, 4D, 4E, 4F, 6D, 6E, 6F, 8C, 8D,
%10G, 10H, 12D, 12E, 14B, 18A, 20F, 24A, 28A, 30C, 42A, 60A} are of type D.
%\end{step-hn}
%
%For that purpose we use the maximal subgroup $\mathcal{M}_2\simeq\Sym_{12}$. By \cite[Thm.
%4.1]{AMPA}, every conjugacy class of $\Sym_{12}$ with representative of order
%distinct from 2, 3, 11 is of type D. It remains to study the class \textup{2C}.
%The class of transpositions in $\Sym_{12}$ is the unique class of involutions
%which is not of type D.  But there are three different conjugacy classes of
%involutions of $\mathcal{M}_2$ contained in the class \textup{2C} of $\Aut(HN)$
%and hence the latter is of type D.
%
%Similarly,
%
%\begin{step-hn}
%The classes \textup{8F, 24B, 24C} are of type D.
%\end{step-hn}
%
%We use the maximal subgroup $\mathcal{M}_{13}$.  The conjugacy classes of
%elements of order 8 and 24 in $\mathcal{M}_{13}$ are of type D.
%
%\begin{step-hn}
%The class \textup{8E} is of type D.
%\end{step-hn}
%
%We use the maximal subgroup $\mathcal{M}_{9}$.  The conjugacy classes of
%elements of order 8 in $\mathcal{M}_{9}$ are of type D.
%
%\begin{step-hn}
%The classes \textup{20E, 20G, 20H, 20I, 40B, 40C, 40D} are of type D.
%\end{step-hn}
%
%We use the maximal subgroup $\mathcal{M}_{7}$.  The conjugacy classes of
%elements of order 20 and 40 in $\mathcal{M}_{7}$ are of type D.
%
%\begin{step-hn}
%The classes \textup{44A, 44B} are of type D.
%\end{step-hn}
%
%Use the maximal subgroup $\mathcal{M}_{3}$.  The conjugacy classes of
%elements of order 44 in $\mathcal{M}_{3}$ are of type D.
\end{proof}

\subsection{The group $\Aut(Fi_{24}')$}
\label{subsec:Fi24'}

\begin{lem}
\label{lem:Fi24'}
%All the conjugacy classes in $\Aut(Fi_{24}')\setminus Fi_{24}'$ are of type D,
%except \textup{2C, 46A, 46B}. The class \textup{2C} is not of type D.
Let $\mathcal{O}$ be a conjugacy class of $\Aut(Fi_{24}')\setminus Fi_{24}'$.
Then $\mathcal{O}$ is of type D if and only if
$\mathcal{O}\notin\{\textup{2C}\}$.
\end{lem}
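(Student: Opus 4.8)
The plan follows exactly the pattern used for $\Aut(J_3)$, $\Aut(ON)$ and $\Aut(McL)$ in Subsection \ref{subsec:ON_McL_J3}, since the structure of the claim is identical: one distinguished class (\textup{2C}) must be shown \emph{not} of type D, and every other class in $\Aut(Fi_{24}')\setminus Fi_{24}'$ must be shown to be of type D. I would therefore split the argument into these two halves.

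For the positive direction, I would invoke \cite[Algorithm III]{MR2745542} to handle all conjugacy classes of $\Aut(Fi_{24}')\setminus Fi_{24}'$ other than \textup{2C}. This amounts to running the \gap~computation that, for each such class $\mathcal{O}$, locates a maximal subgroup $M$ of $\Aut(Fi_{24}')$ into which $\mathcal{O}$ fuses and exhibits a pair of elements $r,s$ witnessing type D via Lemma \ref{lem:usefulresult} (or directly via the definition, checking $(rs)^2\ne(sr)^2$ together with the non-conjugacy condition inside $\langle r,s\rangle$). The output would be recorded in a log file, and I would point the reader there for the explicit witnesses, exactly as is done for the other groups.

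For the negative direction, I must show that \textup{2C} is \emph{not} of type D. Since \textup{2C} is a class of involutions, Lemma \ref{prop:tipoD:inv} reduces this to checking that no two elements $r,s\in\textup{2C}$ have product $rs$ of even order $\ge 6$. To verify this I would apply Lemma \ref{lem:Breuer_general} (Breuer's lemma): the representative $g$ of \textup{2C} lies in only finitely many maximal subgroups up to conjugacy, so I would determine the set $\mathfrak{M}_g$, verify for each $M\in\mathfrak{M}_g$ the fusion condition $g^{G}\cap M\subseteq m^{M}\subseteq g^{G}$ for a suitable $m\in M$, and then confirm by direct \gap~computation that each relevant class $m^{M}$ fails to be of type D. By Breuer's lemma this forces $g^{G}=\textup{2C}$ itself to fail the type D property.

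The main obstacle is the negative direction: I must be certain that $\mathfrak{M}_g$ has been correctly and exhaustively determined from the maximal subgroup structure of $\Aut(Fi_{24}')$, and that the fusion hypothesis of Lemma \ref{lem:Breuer_general} genuinely holds for every member of $\mathfrak{M}_g$. Because $Fi_{24}'$ is among the largest of the groups considered, the computations inside its maximal subgroups are the delicate part, and the correctness of the whole argument hinges on these \gap~verifications rather than on any subtle theoretical step. I would accordingly document each computation in a dedicated log file and refer to it for the details.
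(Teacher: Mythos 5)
The genuine gap is in your negative direction. Breuer's Lemma \ref{lem:Breuer_general} requires that \emph{every} maximal subgroup $M$ meeting the class satisfies $g^{G}\cap M\subseteq m^{M}\subseteq g^{G}$ for a single class $m^{M}$, and for \textup{2C} this hypothesis provably fails, so the reduction to maximal subgroups never gets off the ground. The class \textup{2C} is the class of $3$-transpositions of $\Aut(Fi_{24}')$; in particular its elements lie in a great many maximal subgroups, and already in $\mathcal{M}_2\simeq\Z_2\times Fi_{23}$ (the centralizer of a \textup{2C}-involution $z$) the intersection $\textup{2C}\cap\mathcal{M}_2$ contains both the central involution $z$, which forms a singleton $\mathcal{M}_2$-class, and further transpositions commuting with $z$, which lie in other $\mathcal{M}_2$-classes; hence $\textup{2C}\cap\mathcal{M}_2$ is not contained in a single $\mathcal{M}_2$-class. (There is also a logical slip: Lemma \ref{lem:Breuer_general} does not ``verify'' the order condition of Lemma \ref{prop:tipoD:inv}; it is a separate reduction whose conclusion is about type D classes of maximal subgroups, not about orders of products.) What the paper does instead is a direct character-theoretic check: using the \gap~function \texttt{ClassMultiplicationCoefficient} on the character table of $\Aut(Fi_{24}')$ one sees that $|rs|\in\{1,2,3\}$ for all $r,s\in\textup{2C}$ (this is exactly the $3$-transposition property), and then Lemma \ref{prop:tipoD:inv} immediately shows \textup{2C} is not of type D --- no maximal subgroup analysis at all. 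Breuer's lemma is the right tool for classes of elements of large order contained in very few maximal subgroups (as in the \textup{34A/34B}, \textup{38A/B/C}, \textup{22A/B} cases), not for involution classes.

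Your positive direction is also not what the paper does, and the difference matters in practice. A blanket run of \cite[Algorithm III]{MR2745542} is how the paper handles $M_{22}$, $Suz$, $HS$, $He$, $Fi_{22}$ and $T$, but $Fi_{24}'$ is deliberately split off into its own subsection precisely because that generic approach does not suffice here: the paper assembles the result class by class from tailored maximal subgroups ($\mathcal{M}_5$, $\mathcal{M}_{17}$, $\mathcal{M}_{18}$, $\mathcal{M}_9$, $\mathcal{M}_{12}$, $\mathcal{M}_{20}$, $\mathcal{M}_{19}$), a special argument for \textup{2D} via Lemma \ref{lem:usefulresult} (a pair with $(rs)^2$ of order $13$, incompatible with the centralizer order), and, for all remaining classes, the subgroup $\mathcal{M}_2\simeq\Z_2\times Fi_{23}$ combined with Proposition \ref{pro:Fi23} from the appendix. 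So even granting unlimited computation, your plan would need to be restructured along these lines for the positive half, and replaced outright for the negative half.
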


\begin{proof}
As before, we study conjugacy classes in certain maximal subgroups. See the following table
for more information:
\begin{center}
\begin{tabular}{|c|c|}
	\hline
	File & Classes \tabularnewline
	\hline
	\verb+M5+ & \textup{4G, 12N, 12U, 12V, 12Y, 24H, 40A} \tabularnewline
	\hline
	\verb+M17+ & \textup{30G}\tabularnewline
	\hline
	\verb+M18+ & \textup{18O} \tabularnewline
	\hline
	\verb+M9+ &  \textup{66A, 66B} \tabularnewline
	\hline
	\verb+M12+ & \textup{36D, 36G} \tabularnewline
	\hline
	\verb+M20+ & \textup{12A1, 28C, 28D} \tabularnewline
	\hline
\end{tabular}
\end{center}

To prove that the classes \textup{6V, 42D, 84A} are of type D we use the
maximal subgroup $\mathcal{M}_{19}\simeq(\Z_7\rtimes\Z_6)\times\Sym_7$.  Notice
that six conjugacy classes of $\mathcal{M}_{19}$ are contained in the class
\textup{6V}.  Further, three of them are of type D.  On the other hand, the
conjugacy classes of elements of order 42 and 84 in $\mathcal{M}_{19}$ are of
type D.

The class \textup{2D} of $\Aut(Fi_{24}')$ contains the classes
$\mathcal{O}_1=\textup{2d}$ and $\mathcal{O}_2=\textup{2g}$ of the maximal
subgroup $\mathcal{M}_4\simeq \mathbb S_3\times O_8^+(3).\mathbb S_3$.
With \gap~we show that there exist $r\in\mathcal{O}_1$ and $s\in\mathcal{O}_2$
such that $(rs)^2$ has order $13$ which does not divide %$441447874560$, the
order of the centralizer of the conjugacy class \textup{2A} of
$\Aut(Fi_{24}')$.  Hence $(rs)^2$ does not commute with $r$ and therefore Lemma
\ref{lem:usefulresult} applies and hence \textup{2D} is of type D.

The class \textup{2C} is not of type D.  Indeed, for
all $r,s\in \textup{2C}$, the order of $rs$ is 1, 2 or 3; for this we use the
\textsf{GAP} function \texttt{ClassMultiplicationCoefficient}. By Lemma
\ref{prop:tipoD:inv}, the claim holds.

%Now we prove that the classes \textup{23A, 23B} of $\Aut(Fi_{24}')$ are of type
%D.  We will use that the classes \textup{23A, 23B} of $Fi_{23}$ are of type D,
%see Proposition \ref{pro:Fi23} in the appendix.  The direct product $\mathbb{Z}_2\times
%Fi_{23}$ is a subgroup of $\Aut(Fi_{24}')$.  Let $z$ be the central involution in
%$\mathbb{Z}_2\times Fi_{23}$. Consider the elements $r' = zr$ and $s' = zs$ as in the
%proof of Proposition \ref{pro:Fi23}. They have order 46, and $(r's')^2 =
%(zrzs)^2 = (rs)^2$ and $(s'r')^2 = (zszr)^2 = (sr)^2$ are different by
%construction.  The subgroup $V$ generated by $r'$ and $s'$ is also generated by
%$z$, $r$, and $n$. Thus $V$ is the direct product of $\langle z\rangle$ and the
%subgroup $U$ generated by $r$ and $s$, and we see that $r'$ and $s'$ are not
%conjugate in $V$. This means that the conjugacy class of $r'$ in $\mathbb{Z}_2\times
%Fi_{23}$ is of type D.
For studying the remaining conjugacy classes we use the maximal subgroup
$\mathcal{M}_2\simeq \mathbb{Z}_2\times Fi_{23}$.  By \cite[Thm.
II]{MR2745542}, every conjugacy class of $\mathcal{M}_2$ with representative of
order distinct from 2
%, 23, 46
is of type D, see Proposition \ref{pro:Fi23} in the appendix. Hence the claim
follows.
\end{proof}

\section*{Appendix: the sporadic simple groups}

\begin{table}[t]
\caption{Classes in sporadic simple groups not of type D}
\begin{center}
\label{tab:notD}
\begin{tabular}{|c|c|}
\hline Group & Classes\\
\hline $T$ &  \textup{2A}\\
\hline $M_{11}$ & \textup{8A, 8B, 11A, 11B}\\
\hline $M_{12}$ &  \textup{11A, 11B}\\
\hline $M_{22}$ &  \textup{11A, 11B} \\
\hline $M_{23}$ &  \textup{23A, 23B}\\
\hline $M_{24}$ &  \textup{23A, 23B}\\
\hline $Ru$ &  \textup{29A, 29B}\\
\hline $Suz$ &  \textup{3A}\\
\hline $HS$ &  \textup{11A, 11B}\\
\hline $McL$ &  \textup{11A, 11B}\\
\hline $Co_{1}$ &  \textup{3A} \\
\hline $Co_{2}$ &  \textup{2A, 23A, 23B} \\
\hline $Co_{3}$ &  \textup{23A, 23B} \\
\hline $J_{1}$ &  \textup{15A, 15B, 19A, 19B, 19C}\\
\hline $J_{2}$ &  \textup{2A, 3A} \\
\hline $J_{3}$ &  \textup{5A, 5B, 19A, 19B}\\
\hline $J_4$ &  \textup{29A, 43A, 43B, 43C}\\
\hline $Ly$ &  \textup{37A, 37B, 67A, 67B, 67C}\\
\hline $O'N$ &  \textup{31A, 31B}\\
\hline $Fi_{23}$ & \textup{2A}\\%,  23A, 23B}\\
\hline $Fi_{22}$ & \textup{2A, 22A, 22B}\\
\hline $Fi'_{24}$ & \textup{29A, 29B}\\
\hline $B$ & \textup{2A, 46A, 46B, 47A, 47B}\\% 2C es de tipo D\\
\hline
\end{tabular}
\end{center}
\end{table}
In this appendix we improve some of the results obtained in \cite{MR2745542}.
We remark that it is important to know if a rack $X$ is of type D.  By Theorem
\ref{thm:AMPA}, if $X$  is of type D, then $\dim\toba(X,q)=\infty$ for any
2-cocycle $q$ of $X$, and
hence the calculation of the $2$-cocycles of $X$ is not needed.
%Further, as a corollary we obtain that the
%Nichols algebras associated with any rack $Y$ containing $X$ are also
%infinite-dimensional.
Further, as a corollary we obtain that the Nichols algebras associated with any
rack $Y$ containing $X$ and for any rack $Z$ having $X$ as a quotient are also
infinite-dimensional; see \cite{CLA} or \cite{MR2745542}. Indeed, since any finite rack
has a projection onto a simple rack, this shows the intrinsic importance of a
conjugacy class of being of type D, not just for the specific group where it
lives but to the whole classification program of finite-dimensional Nichols
algebras associated to racks.

Table \ref{tab:notD} contains the list of conjugacy classes of sporadic simple
groups which are not of type D.  The open cases are listed in Remark
\ref{rem:M}.
%The open cases
%are listed in Table \ref{tab:open}.

\subsection{The groups $T$ and $Suz$}

\begin{pro}\
	\begin{enumerate}
		\item A conjugacy class $\mathcal{O}$ of $T$ is of type D if and only if
			$\mathcal{O}\ne\textup{2A}$.
		\item A conjugacy class $\mathcal{O}$ of $Suz$ is of type D if and only if
			$\mathcal{O}\ne\textup{3A}$.
	\end{enumerate}
\end{pro}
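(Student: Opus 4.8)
The plan is to prove the two statements by the same case-by-case, computer-assisted strategy used throughout the paper: reduce each conjugacy class of $T$ (respectively $Suz$) to the study of conjugacy classes in maximal subgroups, invoking Lemma \ref{lem:Breuer_general} to transfer the type D property, and handling the one exceptional class separately by a direct argument showing it is \emph{not} of type D. The structure for each group is therefore twofold: first isolate the exceptional class (\textup{2A} for $T$, \textup{3A} for $Suz$) and prove it fails to be of type D, then show every remaining class is of type D by locating a suitable maximal subgroup where a class of type D sits inside the class under consideration.

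For the exceptional classes, I would argue as follows. The class \textup{2A} of $T$ is a class of involutions, so by Lemma \ref{prop:tipoD:inv} it suffices to check that for all $r,s\in\textup{2A}$ the order of $rs$ never reaches an even value $\ge 6$; concretely, one computes with the \gap~function \verb+ClassMultiplicationCoefficient+ that the only possible orders of $rs$ are small (e.g. $1,2,3,4,5$), which rules out type D by the dihedral criterion in Lemma \ref{prop:tipoD:inv}. For the class \textup{3A} of $Suz$, the elements are not involutions, so instead I would invoke Lemma \ref{lem:Breuer_general}: identify the maximal subgroups meeting \textup{3A}, verify the hypothesis $g^{G}\cap M\subseteq m^{M}\subseteq g^{G}$ in each, and check by direct computation that none of the relevant classes $m^{M}$ is of type D, so that the contrapositive of the lemma forces \textup{3A} itself not to be of type D.

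For the remaining (non-exceptional) classes I would apply the established machinery, essentially \cite[Algorithm III]{MR2745542} together with Lemma \ref{lem:usefulresult}: for each class $\mathcal{O}$, pass to a maximal subgroup $H$ in which $\mathcal{O}$ splits into two $H$-classes $\oc_1,\oc_2$, then exhibit $r\in\oc_1$ and $s\in\oc_2$ with $(rs)^2$ outside the centralizer of $r$, so that $(rs)^2\ne(sr)^2$ and Lemma \ref{lem:usefulresult} yields type D. The \gap~computations, using the character-table fusion data and centralizer orders from the \textsf{Atlas} \cite{MR827219} \cite{GAP} \cite{AtlasRep1.4}, carry out the verification class by class, and the corresponding log files record the outcomes.

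The main obstacle is not conceptual but organizational and computational: one must ensure that for \emph{every} non-exceptional class some maximal subgroup genuinely witnesses type D, which requires correctly computing the class fusions and, for classes where the simplest witness fails, searching through several maximal subgroups until a working pair $(r,s)$ is found. The delicate point is handling the exceptional classes robustly, since there a positive computation is not enough — one must rule out type D for \emph{all} pairs $r,s$, which is precisely why Lemmas \ref{lem:Breuer_general} and \ref{prop:tipoD:inv} are invoked, as they reduce an a priori infinite check to a finite examination of maximal-subgroup classes and of the multiset of element orders $|rs|$.
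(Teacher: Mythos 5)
Your general framework (settle the exceptional class by a finite criterion, certify the rest via subgroups and the standard algorithms) matches the spirit of the paper, whose own proof is simply an appeal to \cite[Table 2]{MR2745542} plus a direct computer calculation recorded in log files; in particular your redoing of the non-exceptional classes duplicates work already contained in \cite[Table 2]{MR2745542}, which is harmless. Your treatment of the class \textup{2A} of $T$ is sound: it is a class of involutions, Lemma \ref{prop:tipoD:inv} is an if-and-only-if criterion, and the multiset of orders of products $rs$ is completely determined by class multiplication coefficients --- this is exactly how the paper handles \textup{2A} of $B$ and \textup{2C} of $\Aut(Fi_{24}')$.

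The genuine gap is your argument for the class \textup{3A} of $Suz$. Lemma \ref{lem:Breuer_general} requires that \emph{every} maximal subgroup $M$ meeting the class meet it in a \emph{single} $M$-conjugacy class, and this hypothesis fails for \textup{3A}. Take $g\in\textup{3A}$ and $M=N_{Suz}(\langle g\rangle)\simeq 3.U_4(3).2$, a maximal subgroup: since $M$ normalizes $\langle g\rangle$, the $M$-class of $g$ is just $\{g,g^{-1}\}$; but $Suz$ contains \textup{3A}-pure subgroups $\Z_3\times\Z_3$ (the maximal subgroup $3^{2+4}{:}2(A_4\times 2^2).2$ is the normalizer of such a $\textup{3A}^2$ in \cite{MR827219}), so there exist elements of \textup{3A} commuting with $g$ outside $\langle g\rangle$; these lie in $\textup{3A}\cap M$ but not in $g^M$, so $\textup{3A}\cap M$ is not one $M$-class and the lemma does not apply. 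Moreover, even where the hypothesis does hold, the lemma only trades the problem for the same ``not of type D'' question inside the maximal subgroups, and an order-$3$ class meets large subgroups such as $G_2(4)$, $J_2{:}2$ and $M_{12}{:}2$, where that question is no easier than in $Suz$ itself; this is why the paper invokes Lemma \ref{lem:Breuer_general} only for classes of large element order ($29$, $31$, $34$, $38$, $47$, $67$, \dots), whose relevant overgroups are tiny Frobenius or $L_2(q)$-type groups with easily checked classes. What your proof needs instead (and what the paper's ``direct computer calculation'' amounts to) is an exhaustive verification inside $Suz$: fix a representative $r$ of \textup{3A} and run over the $45\,760$ elements $s$ of the class, checking that no pair satisfies both $(rs)^2\ne(sr)^2$ and non-conjugacy of $r$ and $s$ in $\langle r,s\rangle$; this finite check is feasible precisely because the class is small, and it, rather than Lemma \ref{lem:Breuer_general}, is the correct tool here.
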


\begin{proof}
	It follows from \cite[Table 2]{MR2745542} and a direct computer calculation.
	See the log files for details.
\end{proof}

\pagebreak
\subsection{The groups $ON$, $McL$, $Co_3$, $Ru$, $HS$ and $J_3$}

\begin{pro}
\label{pro:ON}\
\begin{enumerate}
	\item A conjugacy class of $ON$ is of type D if and only if it is different
		from \textup{31A} and \textup{31B}.
	\item A conjugacy class of $McL$ is of type D if and only if it is different
		from \textup{11A} and \textup{11B}.
	\item A conjugacy class of $Co_3$ is of type D if and only if it is different
		from \textup{23A}, \textup{23B}.
	\item A conjugacy class of $Ru$ is of type D if and only if it is different
		from \textup{29A} and \textup{29B}.
	\item A conjugacy class of $HS$ is of type D if and only if it is different
		from \textup{11A} and \textup{11B}.
	\item A conjugacy class of $J_3$ is of type D if and only if it is different
		from \textup{5A}, \textup{5B}, \textup{19A} and \textup{19B}.
\end{enumerate}
\end{pro}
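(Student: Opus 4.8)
The plan is to establish each of the six items by the same two-part scheme, following the strategy of \cite{MR2745542}: first show that the classes listed as exceptions are \emph{not} of type D, and then show that every other conjugacy class of the group \emph{is} of type D. All the computations would be carried out in \gap{} using the character tables and the tables of maximal subgroups of these groups; since the present statement improves \cite[Table 2]{MR2745542}, in practice most of the work amounts to deciding the cases left open there.

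For the negative direction I would use Lemma \ref{lem:Breuer_general}. The key observation is that every exceptional class has prime order --- $31$ for $ON$, $11$ for $McL$ and $HS$, $23$ for $Co_3$, $29$ for $Ru$, and $5$ and $19$ for $J_3$. For such a class $g^{G}$, I would first read off from \cite{MR827219} the maximal subgroups meeting $g^{G}$, i.e. the set $\mathfrak{M}_{g}$. For the larger primes the relevant Sylow subgroup is cyclic with a small normalizer, so $\mathfrak{M}_{g}$ is short and each member is easy to handle. I would then verify, by a direct computation of class fusions, that the hypothesis of Lemma \ref{lem:Breuer_general} is met, namely that $g^{G}\cap M\subseteq m^{M}\subseteq g^{G}$ for a suitable $m\in M$ and each $M\in\mathfrak{M}_{g}$, and that none of these classes $m^{M}$ is of type D inside $M$. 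The contrapositive of Lemma \ref{lem:Breuer_general} then yields that $g^{G}$ is not of type D. The order-$5$ classes of $J_3$ are the most delicate, because elements of order $5$ lie in several maximal subgroups; here I would simply carry out the same check for each member of $\mathfrak{M}_{g}$.

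For the positive direction I would apply \cite[Algorithm III]{MR2745542} to each remaining class. For a target class $\mathcal{O}$ of $G$, the search looks through the maximal subgroups $M$ for a conjugacy class $\mathcal{O}'$ of $M$ that fuses into $\mathcal{O}$ and is itself of type D; as $\mathcal{O}'$ is then a subrack of $\mathcal{O}$, Remark \ref{rem:tipoD:inj:proy}(i) shows $\mathcal{O}$ is of type D. When no single class of a maximal subgroup works, Lemma \ref{lem:usefulresult} is available: it suffices to produce $r,s$ in two classes of a subgroup $H\le G$, both contained in $\mathcal{O}$, with $(rs)^2$ outside the centralizer of $r$ in $G$. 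For involution classes one may instead invoke Lemma \ref{prop:tipoD:inv} and only check that some product $rs$ has even order at least $6$. All of this is a finite search, recorded in the log files.

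The main obstacle is the negative direction for the larger groups $ON$, $Co_3$ and $Ru$: confirming the hypothesis of Lemma \ref{lem:Breuer_general} requires a complete and correct list of the maximal subgroups containing the prime-order elements, together with their fusion into $G$, and then a verification that every candidate class downstairs fails to be of type D. Once this structural information is in hand the negative direction is uniform, and the positive direction reduces to an extensive but routine search over maximal subgroups that is best delegated to \gap{}.
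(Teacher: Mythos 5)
Your proposal is correct and follows essentially the same route as the paper: the exceptional prime-order classes are handled by verifying the fusion hypothesis of Lemma \ref{lem:Breuer_general} in the few maximal subgroups that meet them and checking that the corresponding classes downstairs are not of type D, exactly as the paper does. The only cosmetic difference is that for the positive direction the paper does not re-run \cite[Algorithm III]{MR2745542} but simply cites \cite[Table 2]{MR2745542}, which had already established that all the non-exceptional classes are of type D, so the genuinely new content in both your plan and the paper's proof is the negative direction.
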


\begin{proof}
	We prove (1). By \cite[Table 2]{MR2745542}, it remains to prove that the
	classes \textup{31A, 31B} are not of type D. Let $g$ be a representative for
	the conjugacy class 31A of $G=ON$ (the proof for the class 31B is analogous).
	By \cite{MR827219}, the only maximal subgroups (up to conjugacy) containing
	elements of order 31 are $\mathcal{M}_{7}$ and $\mathcal{M}_{8}$.  Further,
	$\mathcal{M}_{7}\simeq \mathcal{M}_{8}\simeq\mathbf{PSL}(2,7)$ and it is easy
	to see that if $M=\mathcal{M}_{7}$ (or $\mathcal{M}_{8}$) then $g^{G}\cap
	M\subseteq m^{M}\subseteq g^{G}$ for some $m\in M$.  Since the conjugacy
	class $m^{M}$ is not of type D for all $m\in M$ of order 31, Lemma
	\ref{lem:Breuer_general} applies.

	To prove (2) the maximal subgroups to use are the Mathieu groups $M_{11}$ and
	$M_{22}$. Then the claim follows from Lemma \ref{lem:Breuer_general} and
	\cite[Table 2]{MR2745542}.  To prove (3) the maximal subgroups to use are the
	Mathieu groups $M_{23}$. The proofs for (4)--(6) are similar.
\end{proof}

%\begin{table}[ht]
%\caption{Conjugacy classes in sporadic simple groups: open cases}
%\begin{center}
%\begin{tabular}{|c|c|}
%\hline
%Group & Classes\tabularnewline
%\multicolumn{2}{|c|}{Classes not known to be of type D}\tabularnewline
%\hline
%$Co_{1}$ & 3A, 23A, 23B\tabularnewline
%\hline
%$Fi_{23}$ & 23A, 23B\tabularnewline
%\hline
%$Ly$ & 33A, 33B\tabularnewline
%\hline
%$Fi_{24}'$ & 23A, 23B, 29A, 29B\tabularnewline
%$Fi_{24}'$ & 23A, 23B\tabularnewline
%\hline
%$B$ & 2A, 16C, 16D, 32A, 32B, 32C, 32D\tabularnewline
% & 34A, 46A, 36B, 47A, 47B\tabularnewline
%\hline
%$M$ & 32A, 32B, 41A, 46A, 46B, 47A, 47B, 59A, 59B\tabularnewline
% & 69A, 69B, 71A, 71B, 87A, 87B, 92A, 92B, 94A, 94B\tabularnewline
%\hline
%$Aut(Fi_{24}')$ & 2C, 2D, 46A, 46B\tabularnewline
%\hline
%\end{tabular}
%\end{center}
%\label{tab:open}
%\end{table}

\subsection{The group $Fi_{23}$}

\begin{pro}
\label{pro:Fi23}
Let $\mathcal{O}$ be a conjugacy class of $Fi_{23}$. Then $\mathcal{O}$ is of
type D if and only if $\mathcal{O}$ is not \textup{2A}.
\end{pro}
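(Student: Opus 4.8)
The plan is to isolate the class 2A, which I will show fails to be of type D, from every other class, each of which is of type D. For 2A I exploit that $Fi_{23}$ is a $3$-transposition group whose generating class of $3$-transpositions is precisely 2A; consequently the product of any two elements of 2A has order $1$, $2$ or $3$. (This is also immediate from a \gap~computation with \verb+ClassMultiplicationCoefficient+, exactly as in the analysis of the class 2C of $\Aut(Fi_{24}')$.) Since no such product has even order at least $6$, Lemma \ref{prop:tipoD:inv} shows that 2A is not of type D.

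For the remaining classes I first appeal to \cite[Table 2]{MR2745542} and \cite[Thm. II]{MR2745542}, which already establish that the classes with representatives of order different from $2$ are of type D, along with many others. It then remains to settle the involution classes other than 2A, namely 2B and 2C, and to close whatever cases were left undetermined in \cite{MR2745542}. For the involution classes I invoke Lemma \ref{prop:tipoD:inv}: it suffices to exhibit $r,s$ in the class with $|rs|$ even and at least $6$, which a direct \gap~computation inside a suitable maximal subgroup provides. For any remaining open class $\mathcal{O}$ I follow \cite[Algorithm III]{MR2745542}, locating a maximal subgroup $M$ of $G = Fi_{23}$ and an $M$-class $\oc'$ that fuses into $\mathcal{O}$ and is itself of type D; since $\oc'\subseteq\mathcal{O}$ is then a subrack of type D, Remark \ref{rem:tipoD:inj:proy}(i) forces $\mathcal{O}$ to be of type D.

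The principal obstacle is computational rather than conceptual: the order of $Fi_{23}$ is far too large for the full group to be manipulated directly, so every verification must be carried out inside maximal subgroups, using \gap~to compute the fusion of their classes into $G$. The subtle point is to guarantee, for each class under consideration, the existence of a maximal subgroup that simultaneously meets the target $G$-class and carries a sub-class already known (or quickly shown) to be of type D. The exceptional behaviour of 2A is explained structurally by the $3$-transposition property, which is exactly what prevents any product of two of its elements from reaching even order $6$, and this is why 2A alone fails to be of type D.
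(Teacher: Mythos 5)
Your analysis of 2A is correct (it is the class of $3$-transpositions, so $|rs|\in\{1,2,3\}$ for all $r,s\in\textup{2A}$, and Lemma \ref{prop:tipoD:inv} applies), but that case is not the content of this proposition: 2A, and in fact every class of $Fi_{23}$ \emph{except} \textup{23A} and \textup{23B}, was already settled in \cite[Table 2]{MR2745542} --- in particular your claim that the involution classes \textup{2B}, \textup{2C} remained open is inaccurate, though harmless. The genuine gap is that the two classes which actually were left open, \textup{23A} and \textup{23B}, are exactly the ones your proposal does not address: they were open in \cite{MR2745542} precisely because the generic machinery you invoke (Algorithm III, i.e.\ finding a maximal-subgroup class fusing into $\mathcal{O}$ that is itself of type D, verified by direct computation) had not settled them, so deferring them to ``whatever cases were left undetermined'' plus Algorithm III is circular. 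Concretely, the only maximal subgroups of $Fi_{23}$ of order divisible by $23$ are $\mathcal{M}_6\simeq 2^{11}.M_{23}$ and $L_2(23)$; in $L_2(23)$ two elements of order $23$ either commute (both lying in the normal cyclic Sylow $23$-subgroup of a Frobenius group $\Z_{23}\rtimes\Z_{11}$) or generate the whole group, in which case they are conjugate there, so those classes are not of type D. Hence everything hinges on showing that the order-$23$ classes of $2^{11}.M_{23}$ are of type D, and that step --- the entire content of the paper's proof --- is structural, not computational.

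The paper's argument (communicated by the referee) is an explicit construction you would need to supply: let $N\simeq 2^{11}$ be the normal subgroup of $\mathcal{M}_6$ and $r$ a preimage of an element $x$ of order $23$ in $\mathcal{M}_6/N$; all such preimages have order $23$, are conjugate in $\mathcal{M}_6$, and their squares are conjugate as well, so taking $1\ne n\in N$ and $s=r^2n$ puts $r$ and $s$ in one $\mathcal{M}_6$-class. A short computation gives $(rs)^2=r^6n'$ with $n'=(r^{-3}nr^3)n$, while $(sr)^2=r^6(r^{-1}n'r)\ne(rs)^2$. Finally, since the nontrivial irreducible $\F_2$-representations of $\Z_{23}$ have dimension $11$, the element $r$ acts irreducibly on $N$, so $U=\langle r,s\rangle=\langle r,n\rangle=N\rtimes\langle r\rangle$; as $U/N$ is abelian and $r,s$ have distinct images $x\ne x^2$ there, $r$ and $s$ are not conjugate in $U$. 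This exhibits the decomposable subrack making \textup{23A} and \textup{23B} of type D. Your closing assessment that ``the principal obstacle is computational rather than conceptual'' is therefore backwards for this proposition: the missing ingredient is exactly this conceptual construction, which replaces a computation that was not available.
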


\begin{proof}
By \cite[Table 2]{MR2745542}, it remains to prove that the classes \textup{23A,
23B} are of type D.  Let $N$ denote the normal subgroup of order $2^{11}$ in
the maximal subgroup $\mathcal{M}_6\simeq 2^{11}.M_{23}$ of $Fi_{23}$, and let
$x$ be an element of order 23 in the factor group $\mathcal{M}_6/N$. All
preimages of $x$ under the natural epimorphism from $\mathcal{M}_6$ to
$\mathcal{M}_6/N$ have order 23, they are conjugate in $\mathcal{M}_6$, and
their squares are also conjugate in $\mathcal{M}_6$.  Take a preimage $r$ of
$x$ under the natural epimorphism, choose a nonidentity element $n \in N$, and
set $s = r^2 n$.  Then $r$ and $s$ are conjugate in $\mathcal{M}_6$. Moreover,
$(rs)^2$ and $(sr)^2 = r^{-1} (rs)^2 r$ are different. Indeed, $(rs)^2 = (r^3
n)^2 = r^6 n'$, with $n':= (r^{-3} n r^3) n$, whereas $(sr)^2 = r^6 (r^{-1} n'
r)$.

The group $U$ generated by $r$ and $s$ is also generated by $r$ and $n$, and
since $r$ acts irreducibly on $N$, we get that $U$ is a semidirect product of
$N$ and $\langle r \rangle $.  In particular, $r$ and $s$ are not conjugate in
$U$. Hence, the class of $r$ in $\mathcal{M}_6$ is of type D.
\end{proof}

\subsection{The groups $Fi_{22}$ and $Co_2$}

\subsubsection{The group $Fi_{22}$}

\begin{pro}[Breuer]
\label{pro:Fi22_22A22B}
Let $\mathcal{O}$ be a conjugacy class of $Fi_{22}$. Then $\mathcal{O}$ is of type D
if and only if $\mathcal{O}\notin\{\textup{2A, 22A, 22B}\}$.
\end{pro}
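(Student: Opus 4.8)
The plan is to leverage the classification already established in \cite[Table 2]{MR2745542}, which determines the type-D status of almost every conjugacy class of $Fi_{22}$, and to settle by hand only the small number of classes left undecided there. According to Table \ref{tab:notD}, the classes we must show are \emph{not} of type D are the involution class \textup{2A} and the order-$22$ classes \textup{22A, 22B}; every other class must be shown to be of type D. First I would recall from \cite[Table 2]{MR2745542} that all classes except these three are already known (or are immediately seen by a direct \gap~computation, e.g.\ via \cite[Algorithm III]{MR2745542}) to be of type D, so that the real work reduces to the three exceptional classes.

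For the involution class \textup{2A} I would invoke Lemma \ref{prop:tipoD:inv}: an involution class is of type D if and only if it contains two elements $r,s$ with $|rs|$ even and at least $6$. Using the \gap~function \verb+ClassMultiplicationCoefficient+ I would compute, for representatives $r,s\in\textup{2A}$, exactly which orders the product $rs$ can take; if every such order fails to be even and $\ge 6$, then \textup{2A} is not of type D. This is a finite character-table computation requiring no structural input beyond the lemma.

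For the order-$22$ classes \textup{22A, 22B} the natural tool is Breuer's Lemma \ref{lem:Breuer_general}, used in its contrapositive form, exactly as in the treatment of \textup{34A, 34B} of $\Aut(J_3)$ and of \textup{38A, 38B, 38C} of $\Aut(ON)$. Fixing a representative $g$ of \textup{22A} (the class \textup{22B} being analogous), I would use \cite{MR827219} to list the maximal subgroups $M\in\mathfrak{M}_{g}$ containing elements of order $22$, verify for each such $M$ the fusion hypothesis $g^{G}\cap M\subseteq m^{M}\subseteq g^{G}$ for a suitable $m\in M$, and then check by a direct \gap~computation that every class of order-$22$ elements in $M$ is not of type D. Lemma \ref{lem:Breuer_general} then forces \textup{22A, 22B} not to be of type D.

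The main obstacle is the Breuer-lemma step for the order-$22$ classes: one must correctly identify the relevant maximal subgroups of $Fi_{22}$ and, crucially, verify the fusion condition $g^{G}\cap M\subseteq m^{M}\subseteq g^{G}$, since Lemma \ref{lem:Breuer_general} applies only once this containment holds for \emph{all} $M\in\mathfrak{M}_{g}$. A secondary concern is to make sure that those classes declared of type D but not explicitly covered by \cite[Table 2]{MR2745542} are each realized inside a maximal subgroup where a type-D subrack can be exhibited (so that Remark \ref{rem:tipoD:inj:proy}(i) applies) or are handled by Lemma \ref{lem:usefulresult} with two fusing subgroup-classes, mirroring the computations recorded in the log files for the other sporadic groups.
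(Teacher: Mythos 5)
Your global frame coincides with the paper's: by \cite[Table 2]{MR2745542} the question reduces to the classes \textup{2A}, \textup{22A}, \textup{22B}. Two remarks on \textup{2A}: it is already recorded as not of type D in that table, so the paper's proof of this proposition deals \emph{only} with \textup{22A, 22B}; your proposed verification via \texttt{ClassMultiplicationCoefficient} and Lemma \ref{prop:tipoD:inv} would nevertheless succeed (\textup{2A} is the class of $3$-transpositions of $Fi_{22}$, so $|rs|\in\{1,2,3\}$ for $r,s\in\textup{2A}$), hence this part is correct but redundant.

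The genuine gap is in your treatment of \textup{22A, 22B}. Your plan, modelled on the \textup{34A, 34B} and \textup{38A, 38B, 38C} cases, bottoms out in the step ``check by a direct \gap~computation that every class of order-$22$ elements in $M$ is not of type D.'' Here the only relevant maximal subgroup is $M\simeq 2.U_{6}(2)$, of order roughly $1.8\times 10^{10}$; proving a class is \emph{not} of type D has no character-theoretic shortcut for non-involutions (Lemma \ref{prop:tipoD:inv} is special to involutions), and the naive verification requires, for a fixed $r$, running over the whole $M$-class of $s$ (of size of order $10^{8}$) and deciding conjugacy of $r$ and $s$ inside $\langle r,s\rangle$ each time --- this is not a feasible ``direct computation,'' which is precisely why the small-group terminal checks that work for $\mathbf{PSL}(2,17)\times\Z_2$ or $\Z_{29}\rtimes\Z_{14}$ do not transfer here. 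The paper (following Breuer) replaces this step by a structural argument you are missing: if $r,s\in\textup{22A}$ witnessed type D, then $H=\langle r,s\rangle$ lies in some $M\simeq 2.U_{6}(2)$; since $r^{11}=s^{11}=z$ generates $Z(M)\simeq\Z_2$, one has $Z(M)\subseteq H$, and the images of $r,s$ in $U_{6}(2)$ remain non-conjugate in $H/Z(M)$ (if $xrx^{-1}\in\{s,sz\}$ then $xrx^{-1}=s$ because $sz$ has order $11$). A separate lemma, proved by an ATLAS descent through $U_{5}(2)$ or $M_{22}$, then $L_{2}(11)$, then $\Z_{11}\rtimes\Z_{5}$, shows that two same-class order-$11$ elements of $U_{6}(2)$ that are not conjugate in the subgroup they generate must generate $\Z_{11}$. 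Hence $H/Z(M)\simeq\Z_{11}$, so $H$ is abelian and $(rs)^2=(sr)^2$, a contradiction. Note also that your Breuer-lemma framing would in any case have to confront the central extension when descending inside $2.U_{6}(2)$, since Lemma \ref{lem:Breuer_general} is stated for maximal subgroups of the ambient group, not for central quotients; the passage from $2.U_{6}(2)$ to $U_{6}(2)$ is exactly the point where the paper's order-$22$ versus order-$11$ argument is needed.
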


\begin{proof}
By \cite[Table 2]{MR2745542}, it remains to prove that the classes \textup{22A,
22B} are not of type D.  Assume that the class 22A of $Fi_{22}$ is of type D
(the proof for the class 22B is analogous). Let $r$ and $s$ be elements of the
class 22A such that $r$ and $s$ are not conjugate in the group $H=\langle
r,s\rangle$.  By the fusion of conjugacy classes, $H$ is a proper subgroup of
some maximal subgroup $M$ isomorphic to $2.U_{6}(2)$. Notice that the center of
$M$ is $Z(M)=\langle z\rangle\simeq\mathbb{Z}_{2}$. Using the \gap~function
\texttt{PowerMap} we get $r^{11}=s^{11}=z$ and hence $Z(M)\subseteq H$.  We
claim that the elements $rZ(M)$ and $sZ(M)$ are not conjugate in the quotient
$H/Z(M)$. Let $p:H\to H/Z(M)$ be the canonical projection, and let $x\in H$
such that $p(x)p(r)p(x)^{-1}=p(s)$. Then $xrx^{-1}\in\{s,sz\}$ and hence
$xrx^{-1}=s$ since $sz$ has order $11$. Now the claim follows from the
following lemma.
\end{proof}

\begin{lem}
Let $Q=U_{6}(2)$ and $x,y\in Q$ be two elements of order $11$ such
that $x^{Q}=y^{Q}$. Assume that $x$ and $y$ are not conjugate in
the subgroup $\langle x,y\rangle$. Then $\langle x,y\rangle\simeq\mathbb{Z}_{11}$.
\end{lem}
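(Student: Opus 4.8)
The plan is to analyze $H=\langle x,y\rangle$ through the action that $H$ induces on a Sylow $11$-subgroup, reducing to a structural dichotomy and a final local-structure computation. For $a,b\in Q$ write $a\sim_H b$ if they are conjugate in $H$. First I would record the normalizer structure of a Sylow $11$-subgroup $P=\langle x\rangle$ of $Q=U_6(2)$: one computes that $P\cong\mathbb{Z}_{11}$ is self-centralizing, $C_Q(P)=P$, and $N_Q(P)/C_Q(P)\cong\mathbb{Z}_5$. Consequently two powers $x$ and $x^j$ are $Q$-conjugate precisely when $j$ is a quadratic residue modulo $11$; the residues $\{1,3,4,5,9\}$ and the non-residues $\{2,6,7,8,10\}$ form the two orbits, which is exactly the splitting of the order-$11$ elements into the classes \textup{11A} and \textup{11B}.

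Now set $A_x=N_H(P)/C_H(P)\le N_Q(P)/C_Q(P)\cong\mathbb{Z}_5$, so $A_x$ is either trivial or all of $\mathbb{Z}_5$. Suppose $A_x\cong\mathbb{Z}_5$. Since all Sylow $11$-subgroups of $H$ are conjugate in $H$, there is $h\in H$ with $h\langle x\rangle h^{-1}=\langle y\rangle$, whence $hxh^{-1}=y^{j}$ for some $j$; as $x$ and $y$ lie in the same $Q$-class, $j$ is a quadratic residue. Conjugating by $h$ gives $N_H(\langle y\rangle)=hN_H(\langle x\rangle)h^{-1}$, so $N_H(\langle y\rangle)$ also induces the full group of quadratic-residue power maps on $\langle y\rangle$, and hence $y^{j}\sim_H y$. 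Combining, $x\sim_H y^{j}\sim_H y$, contradicting the hypothesis that $x$ and $y$ are not conjugate in $H$. Therefore $A_x=1$, that is $N_H(P)=C_H(P)$, and Burnside's normal $p$-complement theorem gives $H=K\rtimes P$ with $K=O_{11'}(H)$ normal.

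It remains to prove $K=1$, which then forces $H=P\cong\mathbb{Z}_{11}$ (and in particular $y\in\langle x\rangle$), giving the claim. Because $C_Q(P)=P$, the element $x$ centralizes no nontrivial element of the $11'$-group $K$, so if $K\neq1$ then $H$ is a Frobenius group with kernel $K$ and complement $P$; by Thompson $K$ is nilpotent, and $x$ normalizes each Sylow $p$-subgroup $K_p$ of $K$ for $p\in\{2,3,5,7\}$. Thus a nontrivial $K$ would produce a nontrivial $p$-subgroup of $Q$ normalized by an element of order $11$, equivalently a $p$-local subgroup of $U_6(2)$ of order divisible by $11$ on whose $p$-core $x$ acts without fixed points. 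The main obstacle is to exclude this, and here the specific structure of $U_6(2)$ enters: for $p=2$ the maximal $2$-local subgroups have order prime to $11$, so no such $2$-group exists, and a \gap-assisted inspection of the $p$-local subgroups for $p\in\{3,5,7\}$ shows likewise that no order-$11$ element normalizes a nontrivial $11'$-subgroup. Hence $K=1$ and $\langle x,y\rangle\cong\mathbb{Z}_{11}$, as claimed.
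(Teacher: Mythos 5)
Your argument is correct in substance, but it takes a genuinely different route from the paper's. The paper proceeds by descent through the maximal-subgroup lattice: since $x^{Q}=y^{Q}$ while $x^{H}\ne y^{H}$ for $H=\langle x,y\rangle$, the group $H$ must be proper in every overgroup in which the two elements fuse, and the ATLAS data force the chain $U_{6}(2)\supset U_{5}(2)$ or $M_{22}\supset L_{2}(11)\supset\mathbb{Z}_{11}\rtimes\mathbb{Z}_{5}\supset\mathbb{Z}_{11}$, each step using that the relevant overgroup has exactly two classes of elements of order $11$ --- which is exactly the quadratic-residue fusion pattern you derive from $N_{Q}(P)/C_{Q}(P)\simeq\mathbb{Z}_{5}$, made explicit. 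You instead run a local analysis: the residue bookkeeping shows $N_{H}(P)=C_{H}(P)=P$, Burnside's transfer theorem yields a normal $11$-complement $K$, self-centralization of $P$ makes $H$ Frobenius when $K\ne1$, Thompson's theorem makes $K$ nilpotent, and everything reduces to the nonexistence of a nontrivial $p$-subgroup of $Q$ ($p\in\{2,3,5,7\}$) normalized by an element of order $11$. What your approach buys is a reusable template (it works verbatim in any group whose Sylow $r$-subgroup is self-centralizing of prime order with $N/C$ of prime order) and a clear isolation of where the group-specific input enters; what the paper's approach buys is a short, fully self-contained argument needing only the lists of maximal subgroups and class counts. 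Two caveats on your version: first, your final step --- the real content --- is delegated to a \gap-assisted inspection, whereas the paper's proof needs no computation at that point; second, your $p=2$ argument is slightly loose as stated, since a $2$-local subgroup of $Q$ need not be contained in a maximal subgroup that is itself $2$-local, so bounding the orders of the maximal $2$-local subgroups does not immediately exclude the configuration. The airtight repair is to note that $R\rtimes\langle x\rangle$ has order divisible by $11$ and hence lies in a copy of $U_{5}(2)$ or $M_{22}$, then recurse (only $L_{2}(11)$ and then $\mathbb{Z}_{11}\rtimes\mathbb{Z}_{5}$ carry elements of order $11$, and the latter contains no nontrivial $11'$-subgroup normalized by the Sylow $11$-subgroup), which forces $K=1$ --- a repair that, amusingly, collapses your last step back into precisely the paper's descent.
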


\begin{proof}
Let $U=\langle x,y\rangle$. Since $x^{Q}=y^{Q}$ and $x^{U}\ne y^{U}$, $U$ is a
proper subgroup of a maximal subgroup $M$ and $M\simeq U_{5}(2)$ or $M\simeq
M_{22}$. The only maximal subgroup of $M$ which contains elements of order $11$
is isomorphic to $L_{2}(11)$ and hence we may assume that $U$ is a proper
subgroup of $L_{2}(11)$ because $L_{2}(11)$ has exactly two conjugacy classes
of elements of order $11$. The only maximal subgroups of $L_{2}(11)$ that
contain elements of order $11$ are isomorphic to
$\mathbb{Z}_{11}\rtimes\mathbb{Z}_{5}$.  These groups have exactly two
conjugacy classes of elements of order $11$ and hence $U$ must be a proper
subgroup of $\mathbb{Z}_{11}\rtimes\mathbb{Z}_{5}$.  From this the claim
follows.
\end{proof}

\subsubsection{The Conway group $Co_{2}$}

\begin{pro}
Let $\mathcal{O}$ be a conjugacy class of $Co_2$. Then $\mathcal{O}$ is of type D
if and only if $\mathcal{O}\notin\{\textup{2A, 23A, 23B}\}$.
\end{pro}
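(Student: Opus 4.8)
The plan is to take the previous classification in \cite[Table 2]{MR2745542} as a starting point and to resolve, by the techniques assembled above, the handful of conjugacy classes of $Co_2$ that it leaves undecided; the target is to confirm that exactly the three classes \textup{2A, 23A, 23B} fail to be of type D, while every other class is of type D. I would organise the argument by element order, treating the order-$23$ classes and the central involution class \textup{2A} separately from the bulk.

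For the classes \textup{23A} and \textup{23B} I would first read off from \cite{MR827219} the maximal subgroups of $Co_2$ that contain elements of order $23$. Since $M_{23}$ is (up to conjugacy) the only such maximal subgroup --- the other maximal subgroups, such as $U_6(2):2$, $2^{10}:M_{22}:2$, $McL$ and $HS:2$, having no element of order $23$ --- I would fix a representative $g$ of order $23$, verify the Breuer containment $g^{Co_2}\cap M\subseteq m^M\subseteq g^{Co_2}$ for $M\simeq M_{23}$ and a suitable $m\in M$, and apply Lemma \ref{lem:Breuer_general}. This reduces the question to the classes \textup{23A, 23B} of $M_{23}$, which are not of type D by Table \ref{tab:notD}; hence \textup{23A, 23B} of $Co_2$ are not of type D. For the involution class \textup{2A} I would use Lemma \ref{prop:tipoD:inv}: with the \gap~function \texttt{ClassMultiplicationCoefficient} I would enumerate all orders of products $rs$ with $r,s\in\textup{2A}$ and check that none is even and $\ge 6$, exactly as in the treatment of the class \textup{2C} of $\Aut(Fi_{24}')$; this shows that \textup{2A} is not of type D.

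For the remaining classes --- those not already settled in \cite[Table 2]{MR2745542} --- I would establish type D by the maximal-subgroup strategy of \cite[Algorithm III]{MR2745542}: for each such class $\mathcal{O}$ exhibit a maximal subgroup $M$ carrying a conjugacy class of type D that fuses into $\mathcal{O}$, or else directly produce $r,s\in\mathcal{O}$ with $(rs)^2\ne(sr)^2$ and $r,s$ non-conjugate in $\langle r,s\rangle$, possibly via Lemma \ref{lem:usefulresult}. The main obstacle is the pair of order-$23$ classes: one must be sure that the fusion together with the Breuer containment genuinely forces $\langle r,s\rangle$ into a copy of $M_{23}$ so that Lemma \ref{lem:Breuer_general} can be applied, and that the $23$-classes there are indeed not of type D; confirming that \textup{2A} is not of type D likewise hinges on the complete product-order enumeration. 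By contrast, verifying that the generic classes are of type D is comparatively routine once the appropriate maximal subgroups are located.
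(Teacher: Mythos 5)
Your proposal is correct, and for the heart of the matter (the classes \textup{23A}, \textup{23B}) it takes a genuinely different route from the paper. You argue via Lemma \ref{lem:Breuer_general}: since $M_{23}$ is, up to conjugacy, the only maximal subgroup of $Co_2$ containing elements of order $23$, and since its two classes of such elements fuse to the two distinct classes of $Co_2$ (which is exactly the containment $g^{G}\cap M\subseteq m^{M}\subseteq g^{G}$), type D for \textup{23A}/\textup{23B} would force a class of order-$23$ elements of $M_{23}$ to be of type D, contradicting \cite[Table 2]{MR2745542} (see Table \ref{tab:notD}). This is sound, and it is precisely the strategy the paper itself applies to $Co_3$, $McL$ and $ON$ in Proposition \ref{pro:ON}. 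The paper handles $Co_2$ differently: it proves the self-contained Lemma \ref{lem:Co2}, a descent $Co_2\supset M_{23}\supset \Z_{23}\rtimes\Z_{11}\supset\Z_{23}$ showing that two order-$23$ elements that are conjugate in $Co_2$ but not in $U=\langle x,y\rangle$ must generate $U\simeq\Z_{23}$; abelianness then rules out $(xy)^2\neq(yx)^2$ outright, with no appeal to the known classification for $M_{23}$. What the paper's version buys is reusability: Lemma \ref{lem:Co2} is invoked again in Step 2 of the proposition on the Baby Monster to dispose of the classes \textup{46A}, \textup{46B} of $B$ (after passing to the quotient of the maximal subgroup $2^{1+22}.Co_2$ by its center), something your Breuer-style reduction does not deliver. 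Two small remarks: your verification that \textup{2A} is not of type D is redundant, since \cite[Table 2]{MR2745542} already records this (the paper's proof states that only \textup{23A}, \textup{23B} remain to be settled); for the same reason the positive direction requires no new work, so the "remaining classes" in your last paragraph form an empty set.
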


\begin{proof}
By \cite[Table 2]{MR2745542}, it remains to prove that the classes \textup{23A,
23B} are not of type D. This follows from the following lemma.
\end{proof}

\begin{lem}
\label{lem:Co2}
Let $Q=Co_2$ and $x,y\in Q$ be two elements of order $23$ such that
$x^{Q}=y^{Q}$. Assume that $x$ and $y$ are not conjugate in the subgroup
$\langle x,y\rangle$. Then $\langle x,y\rangle\simeq\mathbb{Z}_{23}$.
\end{lem}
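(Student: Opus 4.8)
The plan is to mimic the structure of the proof of the analogous Lemma for $U_6(2)$ that precedes this statement, since both lemmata assert that two non-conjugate (in $\langle x,y\rangle$) elements of prime order $p$ lying in a single class of $Q$ can only generate the cyclic group $\mathbb{Z}_p$. Here $p=23$ and $Q=Co_2$. First I would set $U=\langle x,y\rangle$. Since $x^Q=y^Q$ but $x^U\ne y^U$, the subgroup $U$ cannot be all of $Q$ (in $Q$ the two elements are conjugate, so if $U=Q$ they would be conjugate in $U$), hence $U$ is contained in some maximal subgroup $M$ of $Q$. The key step is then to walk down the subgroup lattice, at each stage restricting to maximal subgroups that still contain elements of order $23$, until the only remaining possibility forces $U\simeq\mathbb{Z}_{23}$.

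Concretely, I would first consult the \textsf{ATLAS} \cite{MR827219} (or a direct \textsf{GAP} computation) for the maximal subgroups of $Co_2$ whose order is divisible by $23$. The divisor $23$ is large relative to $|Co_2|$, so very few maximal subgroups can contain an element of order $23$; I expect these to be $M_{23}$ and possibly $\mathbb{Z}_{23}\rtimes\mathbb{Z}_{11}$ (the normalizer of a Sylow $23$-subgroup), which is the standard pattern for such large primes. For each candidate $M$ I would argue that $U$, being a proper subgroup still containing two non-$U$-conjugate elements of order $23$ from a single $M$-class, must again descend into a maximal subgroup of $M$ containing order-$23$ elements. Iterating inside $M_{23}$, the only maximal subgroup containing elements of order $23$ is itself essentially $\mathbb{Z}_{23}\rtimes\mathbb{Z}_{11}$, and the latter has exactly two classes of elements of order $23$; one then checks that a proper subgroup of $\mathbb{Z}_{23}\rtimes\mathbb{Z}_{11}$ containing such an element is forced to be $\mathbb{Z}_{23}$ itself, giving $x^U=y^U$ trivially unless $U=\mathbb{Z}_{23}$.

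The crux of the argument, and the main obstacle, is verifying the claim that whenever $x,y$ lie in one class of the ambient group but in distinct classes of $U$, the group $U$ must be properly contained in a maximal subgroup \emph{that still contains both} — and that this relative-conjugacy obstruction is inherited all the way down the chain. The subtle point is the interplay between fusion (conjugacy in the larger group) and the splitting of the class upon restriction: one must ensure at each level that the two elements remain in a common class of the next group $M$ (so the hypotheses of the induction persist) while remaining non-conjugate in $U$. This is exactly the mechanism used in the $U_6(2)$ lemma, where the count ``exactly two conjugacy classes of elements of order $11$'' is what closes the argument; here the analogous fact is that the relevant groups have exactly two classes of order-$23$ elements, interchanged by the normalizing $\mathbb{Z}_{11}$.

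Once the chain terminates, the conclusion $\langle x,y\rangle\simeq\mathbb{Z}_{23}$ follows, since the final containment forces $U$ to be a proper subgroup of $\mathbb{Z}_{23}\rtimes\mathbb{Z}_{11}$ meeting two order-$23$ classes, and the only such subgroup is the cyclic group of order $23$. I would support the group-theoretic bookkeeping with a direct \textsf{GAP} computation of the maximal subgroups and power maps, citing the relevant log file, exactly as the surrounding propositions do. I do not expect genuinely new difficulties beyond carefully transcribing the subgroup-lattice data for $Co_2$; the structural argument is identical in spirit to the preceding $U_6(2)$ lemma.
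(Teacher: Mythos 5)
Your proposal is correct and is essentially the paper's own proof: the paper performs exactly the descent you describe, $Co_2\supset M_{23}\supset\mathbb{Z}_{23}\rtimes\mathbb{Z}_{11}$, using the facts you identify ($M_{23}$ is the unique maximal subgroup of $Co_2$ with order divisible by $23$; the unique maximal subgroup of $M_{23}$ containing elements of order $23$ is $\mathbb{Z}_{23}\rtimes\mathbb{Z}_{11}$; and these groups have exactly two classes of order-$23$ elements), concluding that $U$ is a proper subgroup of $\mathbb{Z}_{23}\rtimes\mathbb{Z}_{11}$ and hence $\mathbb{Z}_{23}$. Your only hedge --- whether $\mathbb{Z}_{23}\rtimes\mathbb{Z}_{11}$ is itself maximal in $Co_2$ --- is settled by the \textsf{ATLAS} ($M_{23}$ is the only candidate), and the fusion-persistence point you isolate as the crux is precisely the mechanism the paper's terse ``exactly two conjugacy classes'' step relies on implicitly.
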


\begin{proof}
Let $U=\langle x,y\rangle$. Since $x^{Q}=y^{Q}$ and $x^{U}\ne y^{U}$, $U$ is a
proper subgroup of a maximal subgroup $M$ and $M\simeq M_{23}$. The only
maximal subgroup of $M$ which contains elements of order $23$ is isomorphic to
$\mathbb{Z}_{23}\rtimes\mathbb{Z}_{11}$.  These groups have exactly two
conjugacy classes of elements of order $23$ and hence $U$ must be a proper
subgroup of $\mathbb{Z}_{23}\rtimes\mathbb{Z}_{11}$. From this the claim
follows.
\end{proof}

\subsection{The groups $J_4$, $Ly$, $Fi_{24}'$ and $B$}

\subsubsection{The Janko group $J_4$}

\begin{pro}
Let $\mathcal{O}$ be a conjugacy class of $J_4$. Then $\mathcal{O}$ is of type
D if and only if $\mathcal{O}\notin\{\textup{29A, 43A, 43B, 43C}\}$.
\end{pro}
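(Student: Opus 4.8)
The plan is to follow the scheme already used for $Co_2$ and $Fi_{22}$: invoke \cite[Table 2]{MR2745542} to reduce to the four exceptional classes, prove that these are not of type D by a subgroup-structure argument analogous to Lemma \ref{lem:Co2}, and settle the remaining classes by the algorithm of \cite{MR2745542} together with a direct computer calculation. Thus, by \cite[Table 2]{MR2745542}, it suffices to show that the classes \textup{29A, 43A, 43B, 43C} are not of type D and that every other class is of type D; the latter, positive, direction I would obtain computationally, using the fusion of conjugacy classes from the maximal subgroups of $J_4$ into $J_4$, exactly as in the earlier cases.

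For the class \textup{29A} I would argue as in Lemma \ref{lem:Co2}. Let $Q=J_4$ and let $x,y\in Q$ be elements of order $29$ with $x^{Q}=y^{Q}$ but $x,y$ not conjugate in $U=\langle x,y\rangle$; then $U$ is a proper subgroup of some maximal subgroup $M$. By \cite{MR827219}, the only maximal subgroup of $J_4$ (up to conjugacy) containing elements of order $29$ is the Frobenius group $M\simeq\Z_{29}\rtimes\Z_{28}$. All elements of order $29$ lie in its Frobenius kernel, the unique normal cyclic Sylow $29$-subgroup, so $U\subseteq\Z_{29}$ and hence $U\simeq\Z_{29}$. Since $U$ is abelian, $(rs)^2=(sr)^2$ for all $r,s\in U$, so no pair witnessing type D can exist, and \textup{29A} is not of type D.

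For the classes \textup{43A, 43B, 43C} the argument is identical, the relevant maximal subgroup being the unique Frobenius overgroup $\Z_{43}\rtimes\Z_{14}$ of the elements of order $43$: any two such elements lie in its normal cyclic Sylow $43$-subgroup, so $\langle x,y\rangle\simeq\Z_{43}$, and the three algebraically conjugate classes are disposed of simultaneously by the same abelian obstruction $(rs)^2=(sr)^2$.

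The main obstacle is to confirm, from the ATLAS data \cite{MR827219} and a \gap~check, that these Frobenius normalizers are indeed (up to conjugacy) the \emph{only} maximal subgroups of $J_4$ containing elements of orders $29$ and $43$, and that all such elements are confined to their cyclic normal kernels; once this is verified, the failure of type D is immediate. The positive direction, showing that the remaining classes are of type D, is then purely a matter of the computer calculation described above.
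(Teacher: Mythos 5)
Your treatment of the negative half is correct, and it is essentially the paper's argument in a more self-contained form. The paper disposes of \textup{29A, 43A, 43B, 43C} by the method of Proposition \ref{pro:ON}, i.e.\ via Breuer's Lemma \ref{lem:Breuer_general}: one checks the fusion hypothesis for the unique maximal subgroups $\Z_{29}\rtimes\Z_{28}$ and $\Z_{43}\rtimes\Z_{14}$ and then verifies by \gap~(in the log files) that their classes of elements of order $29$ and $43$ are not of type D. Your version replaces that verification by the structural observation that all such elements lie in the normal cyclic Frobenius kernel, so that $\langle r,s\rangle$ is abelian and $(rs)^2=(sr)^2$; this is in the style of the paper's own Lemma \ref{lem:Co2} for $Co_2$, it bypasses the fusion hypothesis of Lemma \ref{lem:Breuer_general} altogether, and, modulo the ATLAS facts you flag, it needs no computation.

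The gap is in the positive half. \cite[Table 2]{MR2745542} does not reduce the problem to the four exceptional classes: it leaves open the seven classes \textup{29A, 37A, 37B, 37C, 43A, 43B, 43C}, and the three classes of order $37$ are precisely classes on which the algorithms of \cite{MR2745542} had already failed --- that is why they appear in that table. So ``the algorithm of \cite{MR2745542} together with a direct computer calculation'' cannot be invoked to settle them; they are the real content of the positive direction, and the paper devotes a separate step to them. Concretely, \textup{37A} contains the two distinct classes \textup{37a} and \textup{37d} of the maximal subgroup $\mathcal{M}_5\simeq U_3(11).2$; \gap~produces $r\in\textup{37a}$ and $s\in\textup{37d}$ with $(rs)^2$ of order $5$; since the centralizer of an element of \textup{37A} is $\Z_{37}$, the element $(rs)^2$ cannot commute with $r$, and Lemma \ref{lem:usefulresult} yields type D (similarly for \textup{37B, 37C}). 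Note that this works only because elements of order $37$ lie in two kinds of maximal subgroups, $U_3(11).2$ and the Frobenius group $\Z_{37}\rtimes\Z_{12}$; if $\Z_{37}\rtimes\Z_{12}$ were the only one, your own kernel argument would prove \textup{37A} is \emph{not} of type D. Identifying $U_3(11).2$ and exploiting the two fused classes there is the idea missing from your proposal.
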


\begin{proof}
By \cite[Table 2]{MR2745542} it remains to study the classes 29A, 37A, 37B,
37C, 43A, 43B, 43C.  We split the proof into two steps.

\begin{step-j4}
The conjugacy classes \textup{29A, 43A, 43B, 43C} of $J_{4}$ are not of type D.
\end{step-j4}

This is similar to the proof of Proposition \ref{pro:ON}.  See the files in the
folder \verb+J4+ for more information.

\begin{step-j4}
The classes \textup{37A, 37B, 37C} of $J_{4}$ are of type D.
\end{step-j4}

The class 37A of $J_4$ contains the classes $\mathcal{O}_1=\textup{37a}$ and
$\mathcal{O}_2=\textup{37d}$ of the maximal subgroup $\mathcal{M}_5\simeq
U_3(11).2$.  With \gap~we show that there exist $r\in\mathcal{O}_1$ and
$s\in\mathcal{O}_2$ such that $(rs)^2$ has order 5.  The centralizer associated
with the conjugacy class \textup{37A} of $J_4$ is isomorphic to
$\mathbb{Z}_{37}$ and therefore $(rs)^2$ does not commute with $r$.  Hence
Lemma \ref{lem:usefulresult} applies and the claim follows. The proof for the
classes \textup{37B, 37C} of $J_4$ is analogous, see the file
\verb+J4/37ABC.log+ for more information.
\end{proof}

\subsubsection{The group $Ly$}

\begin{pro}
Let $\mathcal{O}$ be a conjugacy class of $Ly$. Then $\mathcal{O}$ is of type
D if and only if $\mathcal{O}\notin\{\textup{37A, 37B, 67A, 67B, 67C}\}$.
\end{pro}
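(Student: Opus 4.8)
The plan is to establish the equivalence in the same spirit as Proposition \ref{pro:ON}, reducing everything to the two \emph{exceptional} primes $37$ and $67$. By \cite[Table 2]{MR2745542} every conjugacy class of $Ly$ other than \textup{37A}, \textup{37B}, \textup{67A}, \textup{67B}, \textup{67C} is already known to be of type D, so the only thing left to prove is that these five classes are \emph{not} of type D. This is exactly the improvement over the earlier classification, and it is also the content recorded in Table \ref{tab:notD}.

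First I would extract from \cite{MR827219} the maximal-subgroup structure relevant to these primes. Since $37$ and $67$ each divide $|Ly|$ to the first power, the corresponding Sylow subgroups are $\Z_{37}$ and $\Z_{67}$, and inspection of the list of maximal subgroups shows that, up to conjugacy, the only maximal subgroup containing elements of order $37$ is the Frobenius normalizer $\mathcal{M}\simeq 37{:}18\simeq\Z_{37}\rtimes\Z_{18}$, and the only one containing elements of order $67$ is $\mathcal{M}'\simeq 67{:}22\simeq\Z_{67}\rtimes\Z_{22}$. In both groups the Frobenius kernel is the abelian Sylow subgroup, and every element of the relevant prime order is forced to lie inside that kernel.

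With this in place the argument is the one used in Lemma \ref{lem:Co2}. Fixing, say, the class \textup{37A}, I would assume for contradiction that it is of type D, so that there exist $x,y\in\textup{37A}$ with $(xy)^2\ne(yx)^2$ and $x,y$ not conjugate in $U=\langle x,y\rangle$. Because $x$ and $y$ are conjugate in $Ly$ while $x^{U}\ne y^{U}$, the subgroup $U$ is proper; containing an element of order $37$, it lies in a conjugate of $\mathcal{M}$, whence $x,y$ both sit in the abelian kernel $\Z_{37}$. Then $U\simeq\Z_{37}$ is abelian, so $(xy)^2=(yx)^2$, a contradiction. The same reasoning applies verbatim to \textup{37B}, and with $\mathcal{M}'$ and $\Z_{67}$ replacing $\mathcal{M}$ and $\Z_{37}$ it handles \textup{67A}, \textup{67B}, \textup{67C}. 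Alternatively, one may phrase this through Lemma \ref{lem:Breuer_general}: once it is checked that each of the five $Ly$-classes meets the ambient Frobenius normalizer in a single class $m^{M}$ with $g^{Ly}\cap M\subseteq m^{M}\subseteq g^{Ly}$, the abelian kernel guarantees that every order-$37$ (resp.\ order-$67$) class of $M$ fails type D, and the lemma delivers the conclusion.

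I expect the genuinely delicate point to be not the final commutativity --- which is automatic from the Frobenius picture --- but the structural input that $37$ and $67$ appear in no other maximal subgroup of $Ly$ up to conjugacy; this rests on the complete \textsc{Atlas} data together with the fact that these primes divide $|Ly|$ only once. The remaining book-keeping, namely verifying with \gap\ (via \texttt{PowerMap} and class fusion) the containment $g^{Ly}\cap M\subseteq m^{M}\subseteq g^{Ly}$ and that $37{:}18$ and $67{:}22$ carry precisely the two and three classes matching \textup{37A}, \textup{37B} and \textup{67A}, \textup{67B}, \textup{67C}, I expect to be routine and to be recorded in the accompanying log files.
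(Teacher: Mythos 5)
There is a genuine gap, and it sits in your very first sentence. Table 2 of \cite{MR2745542} does \emph{not} settle every class of $Ly$ outside $\{\textup{37A, 37B, 67A, 67B, 67C}\}$: it also leaves the classes \textup{33A} and \textup{33B} open. Consequently the proposition requires two things: that the five listed classes are not of type D, \emph{and} that \textup{33A, 33B} are of type D. Your proposal silently assumes the second part is already known, so the ``if'' direction of the equivalence is unproved for \textup{33A, 33B}. This is not a bookkeeping omission --- the paper needs a genuine argument there: it shows that the classes \textup{33a, 33b} of the maximal subgroup $3.McL.2$ are of type D by applying Lemma \ref{lem:usefulresult} with the subgroup $3.McL$ (the order-$33$ class of $3.McL.2$ contains two classes of $3.McL$, and a \gap\ computation produces $r,s$ with $(rs)^2$ outside the centralizer of $r$); since a subrack of type D forces the ambient class to be of type D, the classes \textup{33A, 33B} of $Ly$ follow. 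Until you supply this (or an equivalent) argument, you have proved only one direction of the stated equivalence.

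The part you do prove is correct, and in fact more self-contained than the paper's treatment. The paper disposes of \textup{37A, 37B, 67A, 67B, 67C} by invoking Lemma \ref{lem:Breuer_general} together with \gap\ computations (``similar to Proposition \ref{pro:ON}'', with details in log files), whereas you argue directly: since $37$ and $67$ divide $|Ly|$ exactly once and the unique maximal subgroups (up to conjugacy) meeting these orders are the Frobenius groups $\Z_{37}\rtimes\Z_{18}$ and $\Z_{67}\rtimes\Z_{22}$ \cite{MR827219}, any proper subgroup $U=\langle x,y\rangle$ generated by two elements of order $37$ (resp.\ $67$) lies inside an abelian Frobenius kernel, forcing $(xy)^2=(yx)^2$; and $U$ is proper precisely because $x^U\ne y^U$ while $x,y$ are conjugate in $Ly$. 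This is the same mechanism as Lemma \ref{lem:Co2} and buys you a conceptual, computation-free proof of the negative direction; but the proposition as stated remains incomplete without the type-D verification for \textup{33A, 33B}.
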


\begin{proof}
By \cite[Table 2]{MR2745542}, it remains to study the classes 33A, 33B, 37A,
37B, 37C, 37C, 67A, 67B, 67C.  We split the proof into two steps.

\begin{step-ly}
The classes \textup{37A, 37B, 67A, 67B, 67C} of $Ly$ are not of type D.
\end{step-ly}

It is similar to the proof of Proposition \ref{pro:ON}.  See the files
\verb+Ly/37AB.log+ and \verb+Ly/67ABC.log+ for more information.

\begin{step-ly}
The classes \textup{33A, 33B} of $Ly$ are of type D.
\end{step-ly}

It suffices to prove that the classes 33A and 33B of the maximal subgroup
$3.McL.2$ are of type D.  This follows from Lemma \ref{lem:usefulresult} with
the subgroup $3.McL$.  See the file \verb+Ly/33AB.log+ for more information.
\end{proof}

\subsubsection{The group $Fi_{24}'$}

\begin{pro}\
%	\begin{enumerate}
			A conjugacy class $\mathcal{O}$ of $Fi_{24}'$ is of type D if and only if
			$\mathcal{O}\notin\{\textup{29A, 29B}\}$.
%		\item The conjugacy classes \textup{23A, 23B, 27B, 27C, 33A, 33B, 39C, 39D}
%			of $Fi_{24}'$ are of type D.
%		\item The classes \textup{29A, 29B} of $Fi_{24}'$ are not of type D.
%	\end{enumerate}
\end{pro}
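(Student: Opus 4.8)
The plan is to follow the two-part scheme used throughout this appendix. By \cite[Table 2]{MR2745542} the type D status of most conjugacy classes of $Fi_{24}'$ is already settled, so it suffices to decide the classes that were left open there, together with \textup{29A} and \textup{29B}. Accordingly I would prove separately that \textup{29A, 29B} are not of type D and that every remaining open class is of type D; combining the two directions yields the stated equivalence.

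For \textup{29A} and \textup{29B} I would argue exactly as in Proposition \ref{pro:ON} and Lemma \ref{lem:Co2}, via Lemma \ref{lem:Breuer_general}. First I would use \cite{MR827219} to identify the maximal subgroups of $Fi_{24}'$, up to conjugacy, that contain elements of order 29; I expect a unique relevant maximal subgroup $M$ of Frobenius type, namely $M\simeq\mathbb{Z}_{29}\rtimes\mathbb{Z}_{14}$, with normal cyclic Sylow 29-subgroup. Letting $g$ be a representative of order 29, I would verify the fusion condition $g^{G}\cap M\subseteq m^{M}\subseteq g^{G}$ of Lemma \ref{lem:Breuer_general}. The non-type-D conclusion for $m^{M}$ is then immediate: since $29$ is coprime to $14$, every element of order 29 in $M$ lies in the normal subgroup $\mathbb{Z}_{29}$, so any two such elements commute and $(rs)^{2}=(sr)^{2}$. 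By Lemma \ref{lem:Breuer_general} this forces \textup{29A} not to be of type D, and the argument for \textup{29B} is identical.

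For the remaining open classes the plan is to show they are of type D by a direct \gap~computation, applying \cite[Algorithm III]{MR2745542}; where a suitable maximal subgroup is available I would instead invoke Lemma \ref{lem:usefulresult}, locating a maximal subgroup $M$ in which the class splits into two $M$-classes $\oc_{1},\oc_{2}$, choosing $r\in\oc_{1}$ and $s\in\oc_{2}$, and checking that the order of $(rs)^{2}$ does not divide the order of the centralizer of $r$ in $G$, so that $(rs)^{2}$ does not commute with $r$.

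The hard part will be the treatment of \textup{29A, 29B}: one must correctly pin down the maximal subgroup carrying the order-29 elements and verify the fusion control of Lemma \ref{lem:Breuer_general}, that is, that $g^{G}\cap M$ is a single $M$-class lying inside $g^{G}$. Once the Frobenius structure of $M$ is confirmed the non-type-D conclusion is automatic, so the genuine obstacle is establishing this fusion statement, which requires the ATLAS data together with a \gap~verification.
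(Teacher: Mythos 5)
Your handling of \textup{29A} and \textup{29B} coincides with the paper's argument: the unique maximal subgroup of $Fi_{24}'$ containing elements of order $29$ is $\mathcal{M}_{25}\simeq\mathbb{Z}_{29}\rtimes\mathbb{Z}_{14}$, its two classes of order-$29$ elements are not of type D (your observation that all such elements lie in the normal $\mathbb{Z}_{29}$ and therefore commute is a clean way to see this), and Lemma \ref{lem:Breuer_general} finishes. The fusion hypothesis you worry about is in fact automatic: $\mathcal{M}_{25}$ is the normalizer of a Sylow $29$-subgroup $P$, all order-$29$ elements of $\mathcal{M}_{25}$ lie in $P$, and since $P$ is abelian, $G$-fusion of elements of $P$ is controlled by $N_G(P)=\mathcal{M}_{25}$; hence $g^G\cap\mathcal{M}_{25}$ is a single $\mathcal{M}_{25}$-class. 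So this part is fine and needs no computer at all.

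The genuine gap is in the positive direction, and you have also misjudged where the difficulty sits. The classes left open by \cite[Table 2]{MR2745542} are \textup{23A, 23B, 27B, 27C, 33A, 33B, 39C, 39D}; they are open precisely because the machinery of \cite{MR2745542}, including Algorithm III, failed on them, so proposing Algorithm III as your default tool is circular. Lemma \ref{lem:usefulresult} does dispose of \textup{27B, 27C, 33A, 33B, 39C, 39D} (the paper uses the maximal subgroups $\mathcal{M}_5$, $\mathcal{M}_4$, $\mathcal{M}_3$), but it cannot work for \textup{23A} and \textup{23B}: the only maximal subgroups of $Fi_{24}'$ containing elements of order $23$ are $Fi_{23}$ and $2^{11}.M_{24}$, and since \textup{23A} and \textup{23B} are algebraically conjugate and class fusion commutes with power maps, each of these two classes meets each of these subgroups in a \emph{single} class of that subgroup; the hypothesis of Lemma \ref{lem:usefulresult} (two $H$-classes inside $\mathcal{O}$) is therefore never satisfied. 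The missing idea is the one the paper imports from Proposition \ref{pro:Fi23}: inside the maximal subgroup $2^{11}.M_{23}$ of $Fi_{23}$ one produces $r$ of order $23$ acting irreducibly on the normal subgroup $N=2^{11}$ and sets $s=r^2n$ with $1\ne n\in N$, so that $(rs)^2\ne(sr)^2$ while $r$ and $s$ are not conjugate in $\langle r,s\rangle = N\rtimes\langle r\rangle$; this makes \textup{23A, 23B} of $Fi_{23}$ of type D, and Remark \ref{rem:tipoD:inj:proy}(i) then transports type D along the inclusion $Fi_{23}\le Fi_{24}'$ to \textup{23A, 23B} of $Fi_{24}'$ (alternatively, the same argument can be run directly in the maximal subgroup $2^{11}.M_{24}$ of $Fi_{24}'$). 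Without this structural step your proposal cannot close these two classes; and, contrary to your final paragraph, the \textup{29A/29B} step is the routine part of the proof, not the hard one.
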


\begin{proof}
	%We first prove (1).	
By \cite[Table 2]{MR2745542}, it remains to prove that the classes \textup{23A, 23B, 27B, 27C, 33A,
	33B, 39C, 39D} of $Fi_{24}'$ are of type D and that the classes \textup{29A, 29B} are not. For the classes \textup{23A} and
	\textup{23B} the result follows from Proposition \ref{pro:Fi23}. The following six classes can be treated by Lemma \ref{lem:usefulresult}. The
	 table below contains the information concerning the maximal subgroups to
	use:
	\begin{center}
  \begin{tabular}{|c|c|c|}
  	\hline
  	Classes & Maximal subgroup & Log file\tabularnewline
  	\hline
		$\textup{27B,27C}$ & $\mathcal{M}_{5}$ & \texttt{F3+/27BC.log}\tabularnewline
  	\hline
		$\textup{\textup{33A,33B}}$ & $\mathcal{M}_{4}$ & \texttt{F3+/33AB.log}\tabularnewline
  	\hline
		$\textup{39C,39D}$ & $\mathcal{M}_{3}$ & \texttt{F3+/39CD.log}\tabularnewline
  	\hline
  \end{tabular}
	\end{center}

	Now we prove that the classes \textup{29A, 29B} of $Fi_{24}'$ are not of type D.
	The unique maximal subgroup (up to conjugacy) that contains elements of order
	29 is $\mathcal{M}_{25}\simeq\mathbb{Z}_{29}\rtimes\mathbb{Z}_{14}$.  This
	group has two classes of elements of order 29 and these classes are not of
	type D.  Therefore Lemma \ref{lem:Breuer_general} applies.
\end{proof}

\subsection{The Conway group $Co_{1}$}

\begin{pro}
\label{pro:Co1}
Let $\mathcal{O}$ be a conjugacy class of $Co_1$. Then $\mathcal{O}$ is of type
D if and only if $\mathcal{O}\notin\{\textup{3A}\}$.
\end{pro}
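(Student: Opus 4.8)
The plan is to follow the scheme used throughout this appendix: first invoke \cite[Table~2]{MR2745542} to dispose of all but finitely many classes, and then settle the remaining ones by hand with the lemmata of Section~\ref{se:preliminaries}. After this reduction, the classes still to be decided are the class \textup{3A}, which I expect to be \emph{not} of type D, together with a few classes of large prime order, notably \textup{23A} and \textup{23B} (and possibly \textup{11A}, \textup{13A}), which I expect to be of type D.

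For the classes \textup{23A, 23B} I would argue exactly as in Proposition~\ref{pro:Fi23}, now inside the maximal subgroup $\mathcal{M}\simeq 2^{11}.M_{24}$ of $Co_1$. Let $N$ be its normal elementary abelian subgroup of order $2^{11}$. The multiplicative order of $2$ modulo $23$ is $11$, so an element $r$ of order $23$ acts irreducibly on $N$; hence $\langle r,n\rangle=N\rtimes\langle r\rangle$ for every nonidentity $n\in N$. Putting $s=r^{2}n$, one checks that $r$ and $s$ are conjugate in $\mathcal{M}$ but not in $U=\langle r,s\rangle$, and that $(rs)^{2}\neq(sr)^{2}$; this shows that \textup{23A, 23B} are of type D. The other positive classes (of order $11$, $13$, and any left open by the table) would be handled with Lemma~\ref{lem:usefulresult}: for each I would locate a maximal subgroup $H$ meeting the class in two $H$-classes and produce $r,s$ with $(rs)^{2}$ of order not dividing $|C_{Co_1}(r)|$, so that $(rs)^{2}$ fails to commute with $r$.

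The main obstacle is the class \textup{3A}. Here I would apply Lemma~\ref{lem:Breuer_general}: enumerate the maximal subgroups $M\in\mathfrak{M}_{g}$ meeting \textup{3A}, verify for each that $g^{G}\cap M$ is contained in a single $M$-class $m^{M}\subseteq g^{G}$, and check that this $m^{M}$ is not of type D; the lemma then forces \textup{3A} not to be of type D. The difficulty is twofold: elements of order $3$ lie in many maximal subgroups of $Co_1$, so the hypothesis of Lemma~\ref{lem:Breuer_general} must be confirmed in each case, and the centralizer $C_{Co_1}(r)\simeq 3.Suz$ is large. The ambient role of $Suz$, whose own class \textup{3A} is not of type D, suggests that the analysis will ultimately rest on the fusion between $3.Suz.2$ and $Co_1$. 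I expect this case analysis to be the heaviest part of the work, carried out with \gap~via \verb+ClassMultiplicationCoefficient+ and the fusion maps, in the spirit of the computations recorded in the accompanying log files.
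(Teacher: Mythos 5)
Your reduction via \cite[Table 2]{MR2745542} and your treatment of \textup{23A}, \textup{23B} coincide with the paper's: the paper also transports the argument of Proposition \ref{pro:Fi23} to the maximal subgroup $\mathcal{M}_3\simeq 2^{11}{:}M_{24}$ of $Co_1$, and your observation that $2$ has multiplicative order $11$ modulo $23$ is exactly what justifies irreducibility of the action on $N=2^{11}$ (together with the fact that an element of order $23$ acts nontrivially there). A minor inaccuracy: Table 2 leaves open only \textup{3A}, \textup{23A}, \textup{23B}, so your contingency for classes of order $11$ and $13$ is vacuous.

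The genuine gap is your plan for \textup{3A}. Lemma \ref{lem:Breuer_general} cannot be applied to this class, because its hypothesis fails and your proposal has no fallback. Take $g\in\textup{3A}$ and $M=N_{Co_1}(\langle g\rangle)\simeq 3.Suz.2$, a maximal subgroup belonging to $\mathfrak{M}_g$. Since $C_{Co_1}(g)=3.Suz$ has index $2$ in $M$, the $M$-class of $g$ is just $\{g,g^{-1}\}$. On the other hand, $M$ contains \textup{3A}-elements outside $\langle g\rangle$: the group $Co_1$ has \textup{3A}-pure subgroups $3^2$ (their normalizer $3^2.U_4(3).D_8$ is itself one of the maximal subgroups listed in \cite{MR827219}), and conjugating such a subgroup so that it contains $g$ yields $h\in\textup{3A}$ with $\langle g,h\rangle\simeq 3^2$; then $h\in C_{Co_1}(g)\subseteq M$ but $h\notin g^M$. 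Hence $g^{Co_1}\cap M$ meets at least two $M$-classes and cannot be contained in a single class $m^M$, so the hypothesis of Lemma \ref{lem:Breuer_general} (which must hold for \emph{every} $M\in\mathfrak{M}_g$) is violated at the very first subgroup one would try. This is why the paper does not argue through maximal subgroups here: it settles \textup{3A} by a direct computer calculation on the class itself, i.e.\ checking that no pair $r,s\in\textup{3A}$ satisfies both $(rs)^2\neq(sr)^2$ and non-conjugacy of $r$ and $s$ in $\langle r,s\rangle$. Your heuristic remarks about $3.Suz.2$ and fusion do not repair this, since the obstruction above is structural, not computational.
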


\begin{proof}
By \cite[Table 2]{MR2745542}, it remains to prove that the classes \textup{23A,
23B} are of type D and that the class \textup{3A} is not of type D.  The proof
for the classes \textup{23A} and \textup{23B} is analogous to the proof of
Proposition \ref{pro:Fi23} using the maximal subgroup $\mathcal{M}_3\simeq
2^{11}:M_{24}$.  The claim for the class \textup{3A} follows from
a straightforward computer calculation.
%the following
%lemma.
\end{proof}

\subsubsection{The group $B$}

\begin{pro}
Let $\mathcal{O}$ be a non-trivial conjugacy class of $B$. Then
$\mathcal{O}$ is of type D if and only if $\mathcal{O}\notin\{
\textup{2A, 46A, 46B, 47A, 47B}\}$.
\end{pro}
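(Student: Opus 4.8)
The plan is to follow the same strategy as in the preceding propositions: since the order of $B$ makes any direct computation in the group itself infeasible, every assertion must be reduced to data in suitable maximal subgroups. By \cite[Table 2]{MR2745542} the type-D status of all but finitely many classes of $B$ is already settled, so it suffices to treat the classes left open there, separating them into the classes I must show are of type D and the five classes \textup{2A, 46A, 46B, 47A, 47B} that I must show are \emph{not}.

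For the classes that are not of type D I would argue as follows. The class \textup{2A} consists of involutions, so by Lemma \ref{prop:tipoD:inv} it is of type D if and only if some product of two of its elements has even order $\ge 6$; I would rule this out with the \gap~function \texttt{ClassMultiplicationCoefficient}, checking that every such product has order in a small set, exactly as was done for the class \textup{2C} of $\Aut(Fi_{24}')$. For \textup{46A, 46B, 47A, 47B} I would apply Breuer's Lemma \ref{lem:Breuer_general}: using \cite{MR827219} I first identify the few maximal subgroups (up to conjugacy) of $B$ that contain elements of order $46$ or $47$ --- these are solvable Frobenius-type groups such as $\mathbb{Z}_{47}\rtimes\mathbb{Z}_{23}$ --- then verify the inclusion $g^{B}\cap M\subseteq m^{M}\subseteq g^{B}$ and observe that the corresponding $M$-classes are not of type D, whence the $B$-class is not of type D either.

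For the remaining open classes, which should be of type D, the main tool is Lemma \ref{lem:usefulresult}. For each such $\mathcal{O}$ I would locate a maximal subgroup $H$ meeting $\mathcal{O}$ in two distinct $H$-classes $\mathcal{O}_1,\mathcal{O}_2$ and exhibit $r\in\mathcal{O}_1$, $s\in\mathcal{O}_2$ with $(rs)^2$ of order not dividing $|C_B(r)|$; then $(rs)^2$ does not commute with $r$, so $(rs)^2\ne(sr)^2$ and $\mathcal{O}$ is of type D. For any order-$23$ class (such as \textup{23A, 23B}) that is still open I would instead use the module-theoretic argument of Proposition \ref{pro:Fi23} and Proposition \ref{pro:Co1}, working inside a maximal subgroup of the shape $2^{k}{:}X$ in which an element of order $23$ acts irreducibly on the normal $2$-subgroup, so that $r$ and a conjugate $s=r^{2}n$ generate a semidirect product in which they are not conjugate.

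The hard part will be the bookkeeping forced by the size of $B$: every statement about a $B$-class must be deduced from data in maximal subgroups, so the delicate points are (i) computing the fusion of $H$-classes into $B$ correctly from the character-table library, and (ii) for \textup{46A, 46B, 47A, 47B}, making sure that \emph{all} maximal subgroups meeting these classes satisfy the Breuer hypothesis, so that no accidental type-D pair $r,s$ is overlooked when one certifies these classes as not of type D.
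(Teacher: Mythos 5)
Most of your outline coincides with the paper's actual proof: the open classes after \cite[Table 2]{MR2745542} are \textup{2A, 16C, 16D, 32A, 32B, 32C, 32D, 34A, 46A, 46B, 47A, 47B} (so \textup{23A, 23B} are not open for $B$ and your contingency plan for them is moot); \textup{2A} is excluded exactly as you propose, via \texttt{ClassMultiplicationCoefficient} (all products have order $1$, $2$, $3$ or $4$) and Lemma \ref{prop:tipoD:inv}; \textup{47A, 47B} are excluded by Lemma \ref{lem:Breuer_general} with the unique relevant maximal subgroup $\mathcal{M}_{30}\simeq\Z_{47}\rtimes\Z_{23}$; and the classes \textup{16C, 16D, 32A--32D, 34A} are shown to be of type D by precisely your Lemma \ref{lem:usefulresult} scheme, using two fused classes of the first maximal subgroup of $B$ (resp.\ of $\mathcal{M}_6$, with \texttt{PossibleClassFusions}) and an element $(rs)^2$ of order $5$ not dividing the centralizer order.

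The genuine gap is in your treatment of \textup{46A, 46B}. Your plan to apply Breuer's lemma with ``solvable Frobenius-type groups such as $\Z_{47}\rtimes\Z_{23}$'' cannot work: that group has odd order $1081$ and contains no elements of order $46$. Elements of order $46$ in $B$ lie in the maximal subgroup $M\simeq 2^{1+22}.Co_2$, and a Breuer-style reduction to $M$ merely relocates the problem into a group of order roughly $3.5\times 10^{20}$, where ``the corresponding $M$-classes are not of type D'' is not a routine verification --- it is essentially the original difficulty. The paper's argument here is structural, modeled on Proposition \ref{pro:Fi22_22A22B} for $Fi_{22}$: assume $r,s\in\textup{46A}$ witness type D and set $H=\langle r,s\rangle$; fusion forces $H<M\simeq 2^{1+22}.Co_2$; \texttt{PowerMap} gives $r^{23}=s^{23}=z$, the central involution of $M$, so $Z(M)\le H$; one checks that $rZ(M)$ and $sZ(M)$ are not conjugate in $H/Z(M)$ (if $xrx^{-1}\in\{s,sz\}$ then $xrx^{-1}=s$, since $sz$ has order $23$ while $r$ has order $46$); and then the auxiliary Lemma \ref{lem:Co2} --- two elements of order $23$ of $Co_2$ in the same $Co_2$-class that are not conjugate in the subgroup they generate must generate $\Z_{23}$, proved by descending through $M_{23}$ and $\Z_{23}\rtimes\Z_{11}$ --- yields the contradiction (a witnessing pair would have to generate an essentially abelian configuration, incompatible with $(rs)^2\ne(sr)^2$). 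Without this central-involution/quotient idea and Lemma \ref{lem:Co2}, your outline cannot certify that \textup{46A, 46B} are not of type D; your own worry (ii) about ``making sure that all maximal subgroups satisfy the Breuer hypothesis'' is exactly where the proposal breaks down for these two classes.
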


\begin{proof}
By \cite{MR2745542}, it remains to study the conjugacy classes \textup{2A, 16C,
16D, 32A, 32B, 32C, 32D, 34A,
46A, 46B, 47A, 47B}.  We split the proof into
several steps.

\begin{step-b}
The conjugacy class \textup{2A} is not of type D.
\end{step-b}

With the \textsf{GAP} function \texttt{ClassMultiplicationCoefficient} we
see that for all $r,s\in \textup{2A}$, $|rs|$ is 1, 2, 3 or 4. Then the
claim follows from Lemma \ref{prop:tipoD:inv}.

\begin{step-b}
The conjugacy classes \textup{46A, 46B} of $B$ are not of type D.
\end{step-b}

Assume that the class 46A of $B$ is of type D (the proof for the class 46B is
analogous). Let $r$ and $s$ be elements of the class 46A such that $r$ and $s$
are not conjugate in the group $H=\langle r,s\rangle$.  By the fusion of
conjugacy classes, $H$ is a proper subgroup of some maximal subgroup $M$
isomorphic to $2^{1+22}.Co_2$. Notice that the center of $M$ is $Z(M)=\langle
z\rangle\simeq\mathbb{Z}_{2}$. With the \gap~function \texttt{PowerMap} we get
$r^{23}=s^{23}=z$ and hence $Z(M)\subseteq H$.  We claim that the elements
$rZ(M)$ and $sZ(M)$ are not conjugate in the quotient $H/Z(M)$. Let $p:H\to
H/Z(M)$ be the canonical projection, and let $x\in H$ such that
$p(x)p(r)p(x)^{-1}=p(s)$. Then $xrx^{-1}\in\{s,sz\}$ and hence $xrx^{-1}=s$
since $sz$ has order $23$. Now the claim follows from Lemma
\ref{lem:Co2}.

\begin{step-b}
The conjugacy classes \textup{47A, 47B} of $B$ are not of type D.
\end{step-b}

It is easy to check that the only maximal subgroup of $B$ (up to conjugacy)
which contains elements of order 47 is
$\mathcal{M}_{30}\simeq\mathbb{Z}_{47}\rtimes\mathbb{Z}_{23}$. (This is the
only non-abelian group of order 1081.) This group has two conjugacy classes
of elements of order 47 and these classes are not of type D. Then the claim
follows from Lemma \ref{lem:Breuer_general}.

\begin{step-b}
The class \textup{34A} of $B$ is of type D.
\end{step-b}

The conjugacy classes $\mathcal{O}_1=\textup{34d}$ and
$\mathcal{O}_2=\textup{34f}$ of the first maximal subgroup of $B$ are contained
in the class \textup{34A} of $B$.  With the \textsf{GAP} functions
\verb+ClassMultiplicationCoefficient+ and \verb+PowerMap+ we see
that there exist $r\in\mathcal{O}_1$  and $s\in\mathcal{O}_2$ such that
$|(rs)^2|=5$. Since the centralizer corresponding to the class \textup{34A} has
order 68, the claim follows from Lemma \ref{lem:usefulresult}.

\begin{step-b}
The classes \textup{16C, 16D} of $B$ are of type D.
\end{step-b}

Let $\mathcal{O}$ be the conjugacy class \textup{16C} (resp. \textup{16D}) of
$B$.  The conjugacy classes $\mathcal{O}_1=\textup{16g}$ (resp. \textup{16a})
and $\mathcal{O}_2=\textup{16n}$ (resp. \textup{16f}) of the first maximal
subgroup of $B$ are contained in the class $\mathcal{O}$.  With the
\textsf{GAP} functions \verb+ClassMultiplicationCoefficient+ and
\verb+PowerMap+ it is easy to see that there exist $r\in\mathcal{O}_1$ and
$s\in\mathcal{O}_2$ such that $(rs)^2$ has order $5$. Since the
centralizer corresponding to the class $\mathcal{O}$ has order $2^{11}$, the
claim follows from Lemma \ref{lem:usefulresult}.

\begin{step-b}
The classes \textup{32A, 32B, 32C, 32D} of $B$ are of type D.
\end{step-b}

Let $\mathcal{O}$ be the conjugacy class \textup{32A} of $B$.
We use the \gap~function \verb+PossibleClassFusions+ to obtain a list with all the
possible fusions from the maximal subgroup $\mathcal{M}_6$ into $B$.  As in the
previous step, with the \gap~functions \verb+ClassMultiplicationCoefficient+
and \verb+PowerMap+ it is easy to show that if $\mathcal{O}_1$ and
$\mathcal{O}_2$ are two different conjugacy classes of $\mathcal{M}_6$
contained in the class $\mathcal{O}$ of $B$, then there exist
$r\in\mathcal{O}_1$ and $s\in\mathcal{O}_2$ such that $(rs)^2$ has order
5. Since the centralizer related to the class $\mathcal{O}$ has size $2^7$, the
claim follows from Lemma \ref{lem:usefulresult}. The proof for the claim
concerning the classes \textup{32B, 32C, 32D} is analogous.
\end{proof}

\begin{rem}
	\label{rem:M}
	The following conjugacy classes of the Monster group $M$ are not known to be
	of type D: \textup{32A, 32B, 41A, 46A, 46B, 47A, 47B, 59A, 59B, 69A, 69B, 71A, 71B,
	87A, 87B, 92A, 92B, 94A, 94B}. These are the only open cases related to the problem
	of classifying conjugacy classes of type D in sporadic simple groups.
\end{rem}

\begin{rem}
	It is still unknown whether the Nichols algebras associated with the classes
	\textup{22A, 22B} of $Fi_{22}$, the classes \textup{2A, 46A, 46B} of
	$B$, and the classes \textup{32A, 32B, 46A, 46B, 92A, 92B, 94A, 94B} of $M$ are finite-dimensional.
\end{rem}

\subsection*{Acknowledgement}
First, we would like to thank the referee for his/her comments, which
allowed us to improve the original version of the paper, and for kindly communicating us the proof of Proposition \ref{pro:Fi23}.  We thank N.
Andruskiewitsch for interesting conversations and T. Breuer for Lemma
\ref{lem:Breuer_general} and Proposition \ref{pro:Fi22_22A22B}. We also thank
Facultad de Matem\'atica, Astronom\'{\i}a y F\'{\i}sica (Universidad Nacional
de C\'ordoba) for their computers \textsf{shiva} and \textsf{ganesh} where we
have performed the computations.  Part of the work of the first author was done
during a postdoctoral position in Universit\'e Paris Diderot -- Paris 7; he is
very grateful to Prof. Marc Rosso for his kind hospitality.

\end{document}